\newcommand{\nl}{\vskip 0.1cm}
\newcommand{\R}{\mathbb{R}}
\newcommand{\E}{\mathbb{E}}
\newcommand{\N}{\mathbb{N}}
\newcommand{\Z}{\mathbb{Z}}
\newcounter{Exocount}
\newcounter{Exocount_1}
\newcounter{Questcount}
\newcounter{Ssquestcount}
\newtheorem{thm}{Theorem}[section]
\newtheorem{lem}[thm]{Lemma}
\newtheorem*{lem*}{Lemma}
\title{Exponential decay of scattering coefficients}
\author{Irène Waldspurger
\footnote{Previously at département d'informatique de l'ENS Paris; now at MIT Institute for Data, Systems and Society.}}
\date{}
\begin{document}

\maketitle

This technical report is the fifth chapter of \citep{these}.

\section{Introduction}

When trying to analyze or interpret a complex signal (image, audio signal, temporal series...), it is often difficult to directly work with the natural representation of the signal (an array of pixels, in the case of an image). On the contrary, using a more appropriate representation can render the task much easier to solve. The scattering transform, introduced by \citet{group_invariant}, can serve as such a representation for various categories of signals and tasks.

Defined over $L^2(\R^d)$, the scattering transform consists in a cascade of wavelet transform modulus. It has the property to be invariant to translations, and stable to small deformations. In the last years, it has proven to be a very efficient tool in diverse settings \citep{anden,oyallon,hirn}.

It belongs to the family of deep representations, but, contrarily to most other members of this family, it is not learned from data. Its theoretical properties are therefore easier to study, and shed some light on the behaviour of deep representations in general. In particular, through the scattering transform, we can hope to approach the following general question: what information contained in the original signal does a deep transform capture? This question is central for the understanding of all deep representations, but for learned ones, only empirical studies have until now been possible \citep{dosovitskiy, simonyan_deep_inside, mahendran}.

For the scattering transform, \citet{bruna_intermittent} have already shown that it characterizes important parameters of some classes of stationary processes. In the present work, we consider the scattering transform over $L^2(\R)$, and partially describe the amount of information contained in each layer of the transform. Informally, we show that, for some $r>0,a>1$, the $n$-th layer of the scattering transform is almost insensitive to the frequencies of the signal that belong to the range $[-ra^n;ra^n]$.

The exact statement of this result is given in Theorem \ref{thm:exp_decay}. Some hypotheses on the wavelets are necessary, but there are much less restrictive than the ones used in \citep{group_invariant}. For simplicity, we restrict ourselves to the one-dimensional case, but we believe that the result could be extended to $L^2(\R^d)$ for $d\geq 1$.

The main two consequences of this result are the following.
\begin{itemize}
\item It generalizes the results of \citep[Section 2]{group_invariant}: it allows to relax the \textit{admissibility condition} on the wavelets.
\item It explains why, in applications, high order scattering coefficients can be neglected. In cases where higher non-linearity orders seem desirable, this should incite us to redesign the definition of high order layers.
\end{itemize}

\nl
The organization is as follows. In Section~\ref{s:scattering_transform}, we precisely define the scattering transform and some known results about it. We also describe in more detail the propagation phenomenon. In Section~\ref{s:exp_decay_thm}, we state the theorem. In Section~\ref{s:exp_decay_proof}, we give the principle of the proof. Section~\ref{s:exp_decay_adaptation} adapts the theorem to the scattering transform on stationary processes. Finally, Section~\ref{s:exp_decay_technical} proves the lemmas necessary for the proof of the main theorem.

\section{The scattering transform\label{s:scattering_transform}}

As in the previous chapters, once a wavelet $\psi\in L^1\cap L^2(\R)$ (such that $\int_\R\psi=0$) has been chosen, we define a family of wavelets $(\psi_j)_{j\in\Z}$ by:
\begin{align*}
&\forall j\in\Z,t\in\R\quad\quad \psi_j(t)=2^{-j}\psi(2^{-j}t)\\
\iff\quad&\forall j\in\Z,\omega\in\R\quad\quad\hat\psi_j(\omega)=\hat\psi(2^j\omega)
\end{align*}

\subsection{Definition}

We follow the definition of \citep{group_invariant}.

The scattering transform consists in a cascade of modulus of wavelet transforms. After each application of the modulus, the resulting functions are locally averaged. The set of averages constitutes the scattering transform.

The averaging is performed with a real-valued positive function $\phi\in L^1\cap L^2(\R)$ such that $\hat\phi(0)=1$. We define:
\begin{equation*}
\forall J\in\Z,t\in\R\quad\quad \phi_J(t)=2^{-J}\phi(2^{-J}t)
\end{equation*}
The convolution with $\phi_J$ represents an average on an interval of characteristic size $2^J$.

\nl
We now formally define the cascade of modulus of wavelet transforms. For any function $f\in L^2(\R)$, we set:
\begin{equation*}
U[\o]f=f
\end{equation*}
and iteratively define, for any $n$-uplet $(j_1,...,j_n)\in\Z^n$, with $n\geq 1$:
\begin{equation*}
U[(j_1,...,j_n)]f=\big|U[(j_1,...,j_{n-1})]f\star \psi_{j_n}\big|
\end{equation*}

\nl
For any $J\in\Z$, we set $\mathcal{P}_J=\left\{(j_1,...,j_n), n\in\N,j_1,...,j_n\in\{-\infty,...,J\}\right\}$; we refer to the elements of $\mathcal{P}_J$ as \textit{paths}. We denote the length (that is, the number of elements) of a path $p$ by $|p|$.

For any $p\in\mathcal{P}_J$, we define:
\begin{equation*}
S_J[p]f=U[p]f\star\phi_J
\end{equation*}
The \textit{scattering coefficients} associated to $f$ at scale $J$ are the set $\{S_J[p]f\}_{p\in\mathcal{P}_J}$.

\nl
The computation of the scattering coefficients is schematized in Figure~\ref{fig:scattering}.

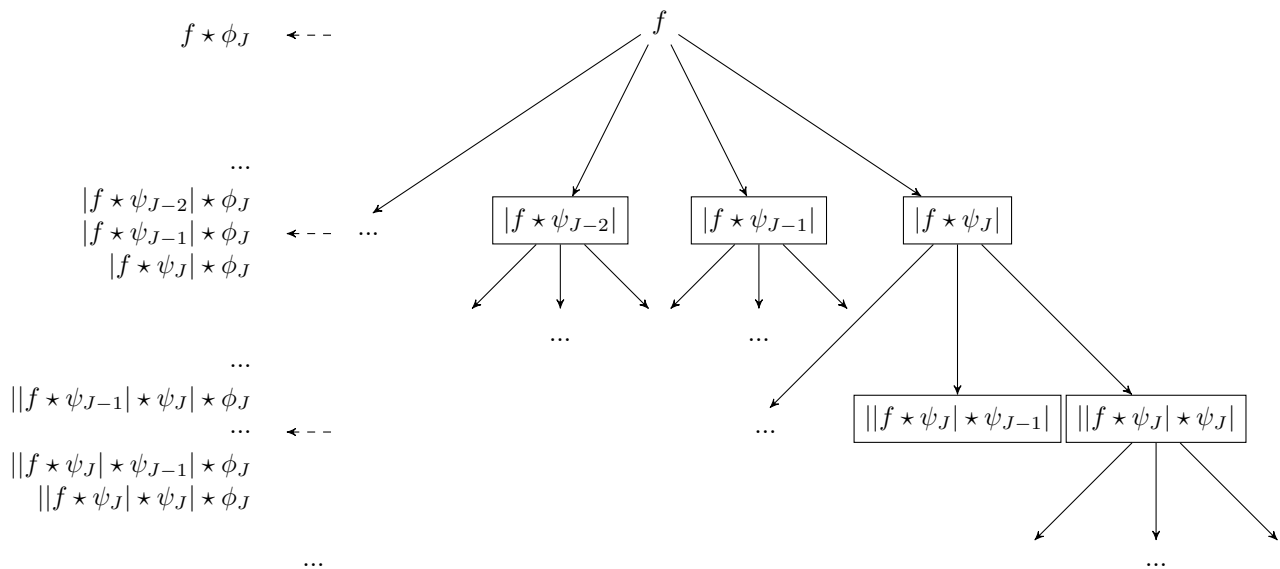
\begin{figure}
\centering
\begin{tikzpicture}[scale=0.88,->,>=stealth',
  level distance=3cm,
  level 1/.style={sibling distance=3cm}
  ]
  \node [] (lev1_r){\footnotesize$f$}
    child {node []{}}
    child {node [draw]{\footnotesize$|f\star\psi_{J-2}|$}
      [level distance=1.5cm,sibling distance=1.5cm]
      child {node []{}}
      child{node[]{}}
      child {node []{}}}
    child {node [draw]{\footnotesize$|f\star\psi_{J-1}|$}
      [level distance=1.5cm,sibling distance=1.5cm]
      child {node []{}}
      child{node []{}}
      child {node []{}}}
    child {node [draw]{\footnotesize$|f\star\psi_{J}|$}
      [level distance=3cm,sibling distance=3cm]
      child {node []{}}
      child{node [draw]{\footnotesize$||f\star\psi_{J}|\star\psi_{J-1}|$}}
      child{node [draw]{\footnotesize$||f\star\psi_{J}|\star\psi_{J}|$}
          [level distance=2cm,sibling distance=2cm]
          child{node[]{}}
          child{node[]{}}
          child{node[]{}}}};

  \node[left] at (-6,-0.2) {\footnotesize$f\star\phi_J$};

  \node[left] at (-6,-2.2) {\footnotesize$...$};
  \node[left] at (-6,-2.7) {\footnotesize$|f\star\psi_{J-2}|\star\phi_J$};
  \node[left] at (-6,-3.2) {\footnotesize$|f\star\psi_{J-1}|\star\phi_J$};
  \node[left] at (-6,-3.7) {\footnotesize$|f\star\psi_{J}|\star\phi_J$};

  \node[left] at (-6,-5.2) {\footnotesize$...$};
  \node[left] at (-6,-5.7) {\footnotesize$||f\star\psi_{J-1}|\star\psi_J|\star\phi_J$};
  \node[left] at (-6,-6.2) {\footnotesize$...$};
  \node[left] at (-6,-6.7) {\footnotesize$||f\star\psi_{J}|\star\psi_{J-1}|\star\phi_J$};
  \node[left] at (-6,-7.2) {\footnotesize$||f\star\psi_{J}|\star\psi_J|\star\phi_J$};

  \node[] at (-4.4,-3.2) {\footnotesize$...$};
  \node[] at (-1.5,-4.8) {\footnotesize$...$};
  \node[] at (1.5,-4.8) {\footnotesize$...$};
  \node[] at (1.6,-6.2) {\footnotesize$...$};
  \node[] at (7.5,-8.2) {\footnotesize$...$};

  \node[] (lev1_r) at (-4.8,-0.2) {};
  \node[] (lev1) at (-5.8,-0.2) {};
  \node[] (lev2_r) at (-4.8,-3.2) {};
  \node[] (lev3_r) at (-4.8,-6.2) {};
  \node[] (lev2) at (-5.8,-3.2) {};
  \node[] (lev3) at (-5.8,-6.2) {};
  \draw[dashed] (lev1_r) -- (lev1);
  \draw[dashed] (lev2_r) -- (lev2);
  \draw[dashed] (lev3_r) -- (lev3);

  \node[left] at (-4.9,-8.2) {\footnotesize$...$};
\end{tikzpicture}
\caption{Schematic illustration of the scattering transform: the tree on the right represents the cascade of modulus of wavelet transforms; the output scattering coefficients are on the left.\label{fig:scattering}}
\end{figure}

\subsection{Norm preservation and energy propagation}

When the wavelets are suitably chosen, the scattering transform preserves the norm \citep[Theorem 2.6]{group_invariant}:
\begin{equation}\label{eq:scattering_norm_preservation}
\underset{p\in\mathcal{P}_J}{\sum}||S_J[p]f||_2^2=||f||_2^2
\end{equation}
In this paragraph, we briefly describe this result. To explain it, we introduce the informal idea that the repeated application of the modulus of the wavelet transform moves the energy contained in the high frequencies of the initial signal towards the low frequencies.

\nl
For the moment, we assume the wavelets to be analytical:
\begin{equation*}
\hat\psi(\omega)=0\mbox{ if }\omega<0
\end{equation*}
and, together with the low-pass filter $\phi$, to satisfy a Littlewood-Paley condition:
\begin{equation*}
\forall \omega\geq 0\quad\quad
|\hat\phi(\omega)|^2+\frac{1}{2}\underset{j\leq 0}{\sum}|\hat\psi_j(\omega)|^2=1
\end{equation*}
This condition ensures that, for any scale $J\in\Z$, the wavelet transform with low-pass filter $f\to\left\{f\star\phi_J,\{f\star\psi_j\}_{j\leq J}\right\}$ is unitary over the set of real-valued functions:
\begin{equation*}
\forall f\in L^2(\R,\R)\quad\quad
||f\star\phi_J||_2^2+\underset{j\leq J}{\sum}||f\star\psi_j||^2_2=||f||_2^2
\end{equation*}
Consequently, the application $W_J:f\to\left\{f\star\phi_J,\{|f\star\psi_j|\}_{j\leq J}\right\}$ preserves the norm. As the scattering transform is computed by recursively applying this operator to the input function, we can prove by iteration over $n$ that, for any length $n\geq 0$:
\begin{equation*}
\forall f\in L^2(\R,\R)\quad\quad
\underset{p\in\mathcal{P}_J,|p|< n}{\sum}||S_J[p]f||_2^2 \,+
\underset{p\in\mathcal{P}_J,|p|= n}{\sum}||U[p]f||_2^2
= ||f||_2^2
\end{equation*}
If we can prove that $\sum_{p\in\mathcal{P}_J,|p|= n}||U[p]f||_2^2$ goes to zero when $n$ goes to infinity, then we can prove Equation~\eqref{eq:scattering_norm_preservation}. Under an additional condition on the wavelets, this is done in \citep[Theorem 2.6]{group_invariant}:
\begin{thm}\label{thm:norm_preservation}
A family $(\phi,\{\psi_j\}_{j\in\Z})$ is said to be admissible if there exists $\eta\in\R$ and a positive real-valued function $\rho\in L^2(\R)$ such that:
\begin{equation*}
\forall\omega\in\R,\quad|\hat\rho(\omega)|\leq |\hat\phi(2\omega)|
\quad\quad\mbox{and}\quad\quad
\hat\rho(0)=1
\end{equation*}
and that the function:
\begin{equation*}
\hat\Psi(\omega)=|\hat\rho(\omega-\eta)|^2-\underset{k=1}{\overset{+\infty}{\sum}}k(1-|\hat\rho(2^{-k}(\omega-\eta))|^2)
\end{equation*}
satisfies:
\begin{equation}\label{eq:admissibility_condition}
\underset{1\leq\omega\leq 2}{\inf}
\sum_{j=-\infty}^{+\infty}\hat\Psi(2^j\omega)|\hat\psi_j(\omega)|^2>0
\end{equation}
If $(\phi,\{\psi_j\}_{j\in\Z})$ is admissible, then, for any real-valued function $f\in L^2(\R)$ and any scale $J\in\Z$:
\begin{equation}\label{eq:scattering_to_zero}
\underset{p\in\mathcal{P}_J,|p|=n}{\sum}||U[p]f||_2^2\to 0\quad\quad
\mbox{when }n\to+\infty
\end{equation}
which implies the norm preservation:
\begin{equation*}
\underset{p\in\mathcal{P}_J}{\sum}||S_J[p]f||_2^2=||f||_2^2
\end{equation*}
\end{thm}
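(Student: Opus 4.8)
The plan is to track the total energy present at each layer and show it leaks away completely into the scattering coefficients. Fix $J$ and write, for each level $n\geq 0$, the propagated energy $A_n=\sum_{p\in\mathcal{P}_J,|p|=n}\|U[p]f\|_2^2$ and the output energy $B_n=\sum_{p\in\mathcal{P}_J,|p|=n}\|S_J[p]f\|_2^2$. Comparing the iterated identity of the excerpt at consecutive levels $n$ and $n+1$ gives the one-step balance $A_n=B_n+A_{n+1}$, which just records that one application of $W_J$ splits $\|U[p]f\|_2^2$ into the low-pass part $\|S_J[p]f\|_2^2$ and the sum of the children $\sum_{j\leq J}\|U[(p,j)]f\|_2^2$ (the modulus being $L^2$-norm preserving). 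Thus $(A_n)$ is non-increasing and nonnegative, so it converges to some $\ell\geq 0$, and $\sum_{k<n}B_k=\|f\|_2^2-A_n\to\|f\|_2^2-\ell$. Proving \eqref{eq:scattering_to_zero}, i.e. $A_n\to 0$, is therefore equivalent to $\ell=0$, and it immediately yields the norm-preservation identity; the whole problem reduces to ruling out $\ell>0$.

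The mechanism driving $\ell=0$ is the \emph{demodulating} effect of the modulus combined with analyticity. Heuristically, $f\mapsto|f\star\psi_j|$ sends a signal whose spectrum concentrates around the frequency $2^{-j}$ to one whose spectrum concentrates near $0$, so each wavelet-modulus step transports energy toward low frequencies, where $\phi_J$ eventually captures it and outputs it. A preliminary reduction is needed to make this rigorous: since $f$ is real-valued one first uses the Littlewood--Paley condition and the analyticity of the $\psi_j$ to pass to analytic intermediate signals, the factor $\tfrac12$ in the identity reflecting exactly the symmetry of the spectrum of a real signal. On these analytic signals the demodulation statement takes a clean frequency-domain form.

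To convert the heuristic into a quantitative decay I would use the function $\hat\Psi$ as a spectral test weight. It is designed to be positive and close to $1$ near the frequency $\eta$ and to turn negative away from it, so that it rewards precisely the low-frequency band into which the modulus pushes energy. The admissibility condition \eqref{eq:admissibility_condition} then supplies the uniform constant
\[
\kappa=\inf_{1\leq\omega\leq 2}\ \sum_{j=-\infty}^{+\infty}\hat\Psi(2^j\omega)\,|\hat\psi_j(\omega)|^2>0,
\]
and the scaling relation $\hat\psi_j(\omega)=\hat\psi(2^j\omega)$ lets me propagate this lower bound from $[1,2]$ to all of $(0,+\infty)$. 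The target is a single-step \emph{capture} estimate: any energy still propagating at a given level must, within boundedly many further steps, leak a uniform positive fraction of itself (governed by $\kappa$) into the low-pass output. Granting this, $\ell=0$ follows by contradiction: if $\ell>0$ then infinitely many levels carry propagated energy at least $\ell$, each forced to contribute at least a fixed positive amount $\gtrsim\kappa\ell$ to some $B_k$, whence $\sum_k B_k=+\infty$, contradicting $\sum_k B_k\leq\|f\|_2^2<\infty$.

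I expect the demodulation/capture estimate to be the main obstacle, and it is where the precise self-similar form
\[
\hat\Psi(\omega)=|\hat\rho(\omega-\eta)|^2-\sum_{k=1}^{+\infty}k\bigl(1-|\hat\rho(2^{-k}(\omega-\eta))|^2\bigr)
\]
becomes indispensable. The dyadic sum is tuned so that $\hat\Psi$ obeys a one-step recursion matched to the wavelet dilations, turning the pointwise frequency shift produced by the modulus into the telescoping quantity the admissibility constant can control; meanwhile the hypotheses $|\hat\rho(\omega)|\leq|\hat\phi(2\omega)|$ and $\hat\rho(0)=1$ guarantee both convergence of the series (since $1-|\hat\rho|^2$ vanishes quadratically at the origin, beating the linear factor $k$) and the correct sign structure of $\hat\Psi$. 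Verifying that the modulus of an analytic signal genuinely moves spectral mass in the direction $\hat\Psi$ rewards, uniformly over all paths and all inputs, is the technical heart of the argument; I would isolate it as the lemmas of Section~\ref{s:exp_decay_technical}, matching the corresponding lemmas of \citep{group_invariant}.
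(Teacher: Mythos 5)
Your reduction is sound and matches what the paper actually does: the one-step balance $A_n=B_n+A_{n+1}$ follows from the norm preservation of $W_J$ (itself a consequence of the Littlewood--Paley identity assumed in that subsection), so $(A_n)$ decreases to some $\ell\geq 0$, $\sum_k B_k=\|f\|_2^2-\ell$, and \eqref{eq:scattering_to_zero} together with the norm-preservation identity is equivalent to $\ell=0$. Note, however, that this reduction is exactly the part the paper itself carries out \emph{before} stating the theorem; the theorem's actual content --- that admissibility forces $\ell=0$ --- is not proved in this paper at all but quoted from \citep[Theorem 2.6]{group_invariant}, so there is no internal proof to match beyond this point.

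The genuine gap is that everything after the reduction in your proposal is a plan rather than a proof. The ``capture estimate'' --- that energy still propagating at level $n$ must surrender a uniform fraction, controlled by the admissibility constant $\kappa$, to the low-pass outputs within boundedly many further steps --- is precisely the whole theorem, and you state it as a target (``Granting this\ldots'', ``I would isolate it as the lemmas\ldots'') without establishing it. In particular, you never prove the demodulation inequality relating $\widehat{|f\star\psi_j|}$ to a frequency-shifted copy of $\widehat{f\star\psi_j}$, nor the telescoping recursion for $\hat\Psi$ that converts the admissibility condition \eqref{eq:admissibility_condition} into a per-level energy loss. Your contradiction argument also glosses over a real difficulty: in the reference proof the conclusion is first obtained as a quantitative bound of the form $A_n\leq C\|f\|_w^2/n$ for a \emph{weighted} norm $\|\cdot\|_w$ that is finite only on a dense subclass of $L^2(\R)$, and is then extended to all of $L^2(\R)$ by non-expansiveness of the scattering operators; a direct ``each level loses a fraction $\gtrsim\kappa\ell$'' argument on arbitrary $f\in L^2$ is not available as stated. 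As written, the proposal establishes the (easy) equivalence $\ell=0\iff$ norm preservation, and correctly identifies where $\hat\Psi$ and $\kappa$ must enter, but does not prove \eqref{eq:scattering_to_zero}.
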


Intuitively, the property~\eqref{eq:scattering_to_zero} holds because the iterative application of the modulus of the wavelet transform moves the energy carried by the high frequencies of the signal towards the low frequencies. At each step of the scattering transform, the energy in the lowest frequency bands is output by convolution with $\phi_J$. The remaining part is shifted towards the lower frequencies by a new application of the modulus of the wavelet transform and so on.

For the displacement of the energy towards the low frequencies, the modulus is essential: for each signal $f$, $f\star\psi_j$ is a function whose energy is concentrated in a frequency band of characteristic size $2^{-j}$, with a mean frequency also of the order of $2^{-j}$. After application of the modulus, $|f\star\psi_j|$ tends to have its energy concentrated in a frequency band of characteristic size still equal to $2^{-j}$, but now centered around zero. So the frequencies that it contains are globally lower than the frequencies of $f\star\psi_j$. An example of this phenomenon is displayed on Figure~\ref{fig:shift_low}.

According to this simplistic reasoning, the modulus of the wavelet transform approximately moves the energy contained in a frequency band around $2^j$ to the frequency band $[-2^{j-1},2^{j-1}]$. By iteratively applying this argument, we expect the energy to arrive in the frequency band $[-2^J,2^J]$ (and thus disappear) after a number of scattering steps proportional to $j$. The theorem of the next section will formalize this idea.

\begin{figure}
\centering
\begin{tabular}{cccc}
\includegraphics[width=0.22\textwidth]{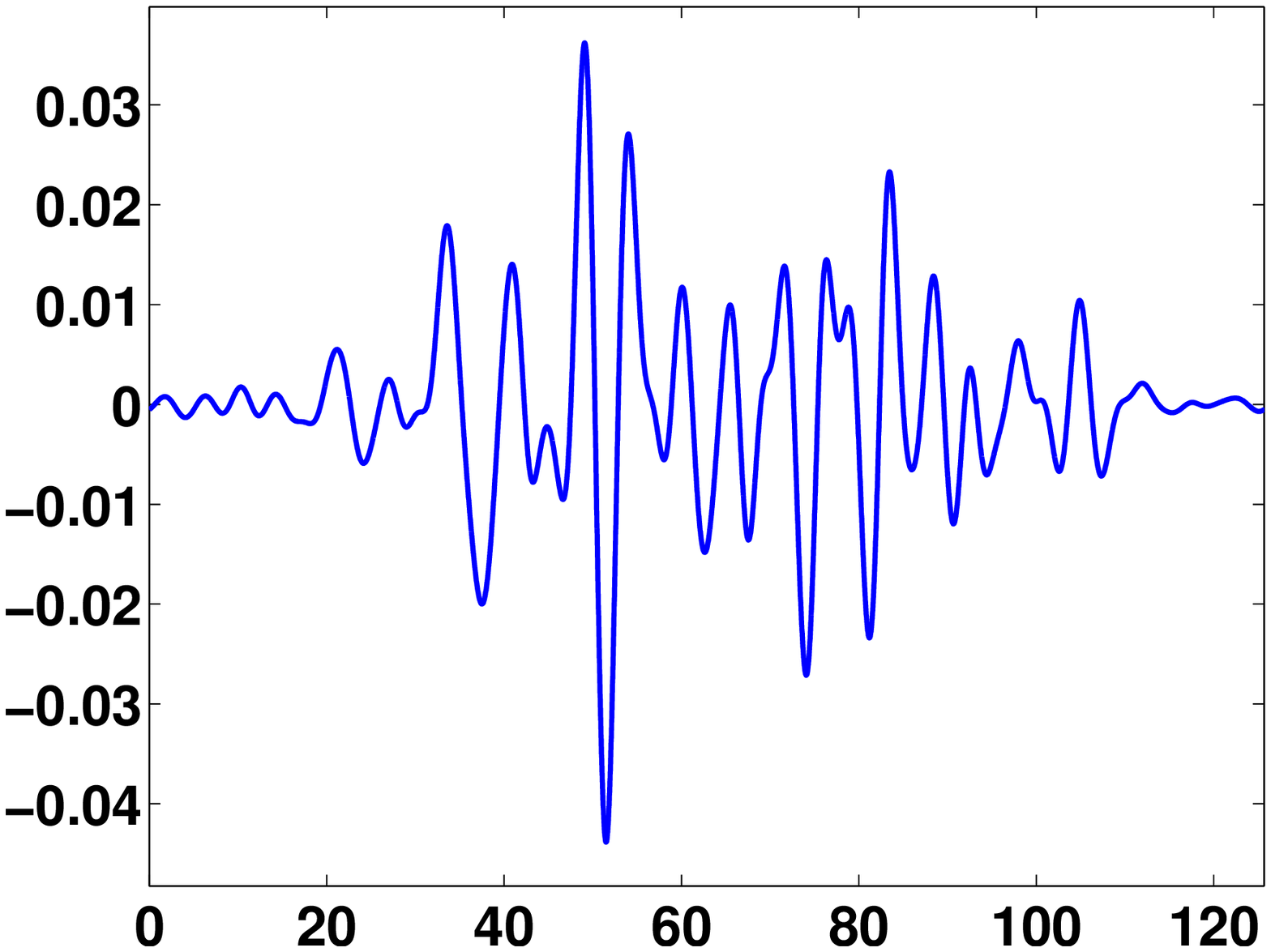}&
\includegraphics[width=0.22\textwidth]{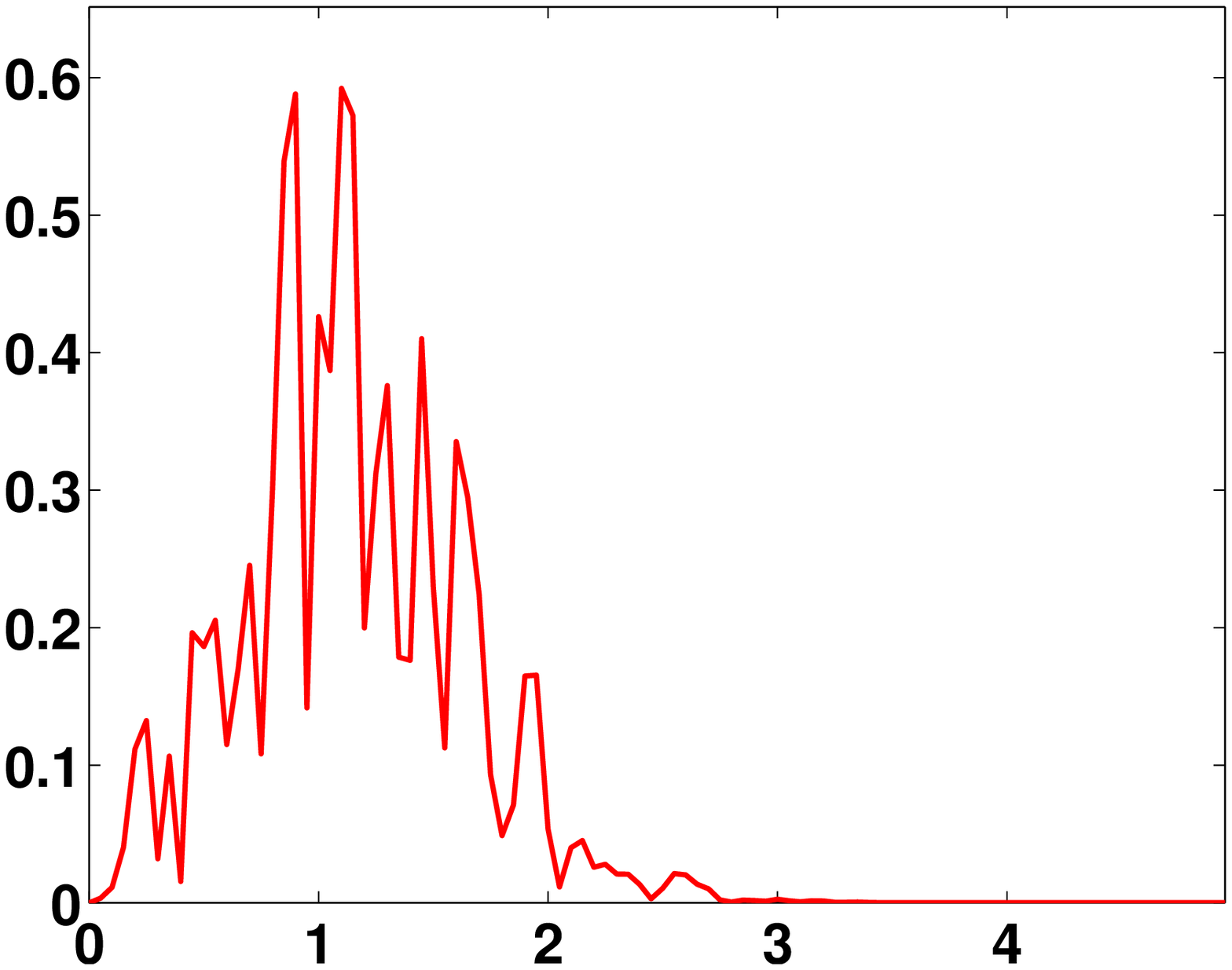}&
\includegraphics[width=0.22\textwidth]{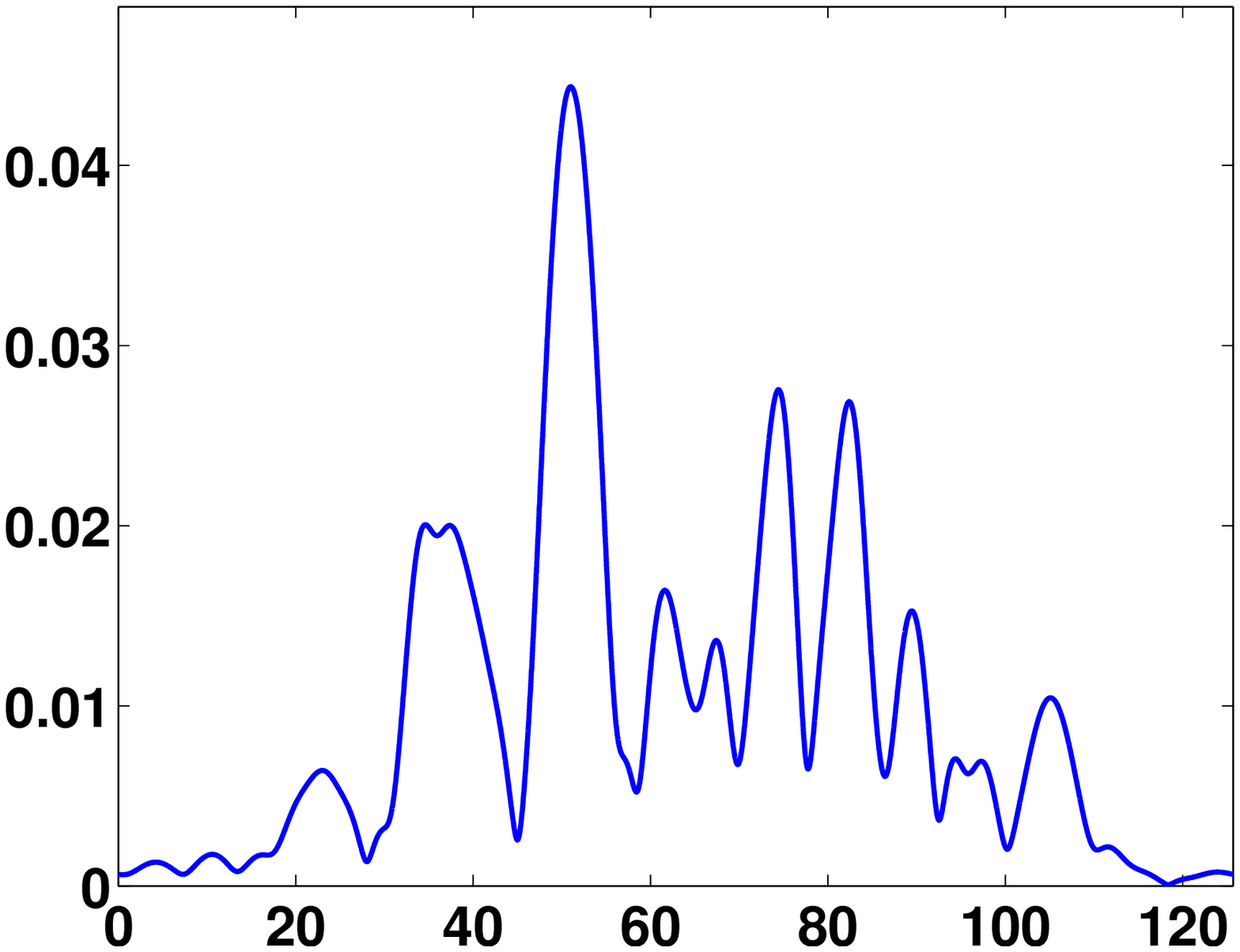}&
\includegraphics[width=0.22\textwidth]{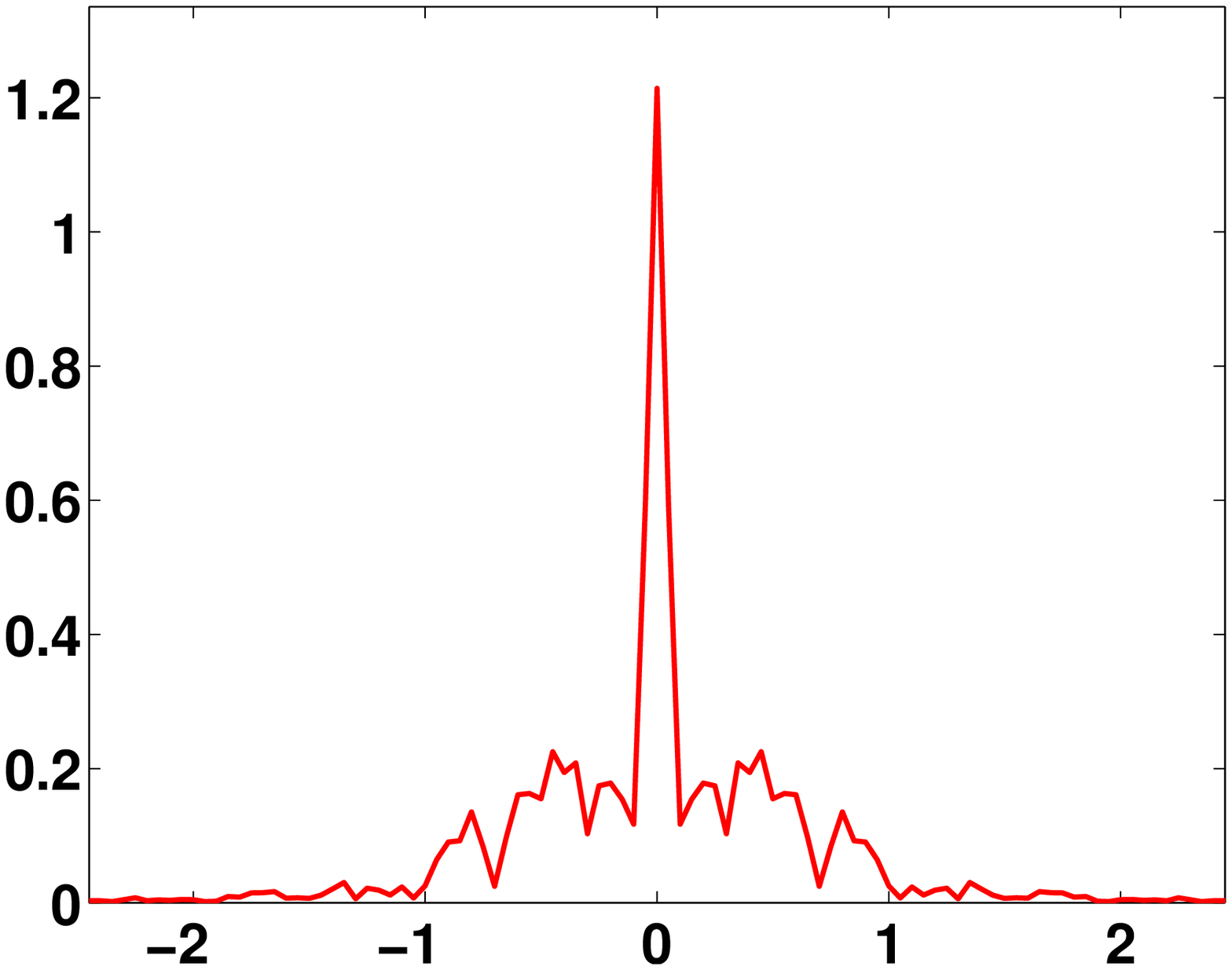}\\
(a)&(b)&(c)&(d)
\end{tabular}
\caption{Illustration of the shift towards low frequencies due to the modulus. (a) $f\star\psi_j$ (real part) (b) $\widehat{f\star\psi_j}$ (in modulus) (c) $|f\star\psi_j|$ (d) $\widehat{|f\star\psi_j|}$. Observe that $\widehat{f\star\psi_j}$ and $\widehat{|f\star\psi_j|}$ are localized on frequency bands of the same width, but that, for $\widehat{|f\star\psi_j|}$, the band is centered around $0$, and thus globally lower in frequency.\label{fig:shift_low}}
\end{figure}

\section{Theorem statement\label{s:exp_decay_thm}}

The theorem of this section relies on similar ideas to the ones of Theorem~\ref{thm:norm_preservation} but improves it in two aspects:
\begin{itemize}
\item It gives a precise bound on $\sum_{p\in\mathcal{P}_J,|p|=n}||U[p]f||_2^2$, instead of simply ensuring that this quantity goes to zero. This bound formalizes the idea described in the end of the previous paragraph, that the energy of the signal carried by the frequencies around $2^j$ disappears after a number of scattering step proportional to $j$.
\item It holds for a much more general class of wavelets that Theorem~\ref{thm:norm_preservation}. In particular, it does not require the wavelets to be analytical. It notably applies to Morlet wavelets, and also to compactly-supported wavelets like Selesnick wavelets \citep{selesnick}, two important choices for practical applications. The wavelets have to satisfy three simple conditions, which will be commented after the statement of the theorem.
\end{itemize}

For any $a>0$, we denote by $\chi_a$ the Gaussian function:
\begin{equation*}
\forall t\in\R\quad\quad
\chi_a(t)=\sqrt{\pi}a\exp\left(-\left(\pi a\right)^2t^2\right)
\end{equation*}
whose Fourier transform satisfies:
\begin{equation*}
\forall\omega\in\R\quad\quad
\hat\chi_a(\omega)=\exp(-(\omega/a)^2)
\end{equation*}

\begin{thm}\label{thm:exp_decay}
Let $(\psi_j)_{j\in\Z}$ be a family of wavelets. We assume the following Littlewood-Paley inequality to be satisfied:
\begin{equation}
\forall \omega\in\R\quad\quad
\frac{1}{2}\underset{j\in \Z}{\sum}
\left(|\hat\psi_j(\omega)|^2+|\hat\psi_j(-\omega)|^2\right)
\leq 1
\label{eq:exp_decay_lp}
\end{equation}
We also assume that:
\begin{equation}\label{eq:smaller_negs}
\forall j\in\Z,\omega> 0\quad\quad
|\hat\psi_j(-\omega)|\leq|\hat\psi_j(\omega)|
\end{equation}
and, for any $\omega$, the inequality is strict for at least one value of $j$.

Finally, we assume that, for some $\epsilon>0$:
\begin{equation}\label{eq:wavelet_in_zero}
\hat\psi(\omega)=O(|\omega|^{1+\epsilon})
\end{equation}
when $\omega\to 0$.

Then, for any $J\in\Z$, there exist $r>0,a>1$ such that, for all $n\geq 2$ and $f\in L^2(\R,\R)$:
\begin{equation}
\underset{p\in\mathcal{P}_J,|p|=n}{\sum}||U[p]f||_2^2\leq
||f||_2^2-||f\star\chi_{ra^n}||_2^2=
 \int_\R|\hat f(\omega)|^2\left(1-|\hat\chi_{ra^n}(\omega)|^2\right)d\omega
\label{eq:exp_decay}
\end{equation}

\end{thm}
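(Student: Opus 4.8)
\section*{Proof proposal}

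The plan is to prove \eqref{eq:exp_decay} by induction on the layer index, peeling off one modulus-of-wavelet step at a time. The backbone is the elementary energy inequality that follows from hypothesis~\eqref{eq:exp_decay_lp}: for a real-valued $g$ one has $|\hat g(-\omega)|=|\hat g(\omega)|$, so, after dropping the truncation $j\le J$ and symmetrizing against the even weight $|\hat g|^2$,
$\sum_{j\le J}||g\star\psi_j||_2^2\le\int_\R|\hat g(\omega)|^2\tfrac12\sum_{j\in\Z}\big(|\hat\psi_j(\omega)|^2+|\hat\psi_j(-\omega)|^2\big)\,d\omega\le||g||_2^2$,
and since $|||g\star\psi_j|||_2=||g\star\psi_j||_2$ the total energy is non-increasing from one layer to the next. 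To upgrade this to the quantitative bound I would seek a family of frequency weights $0\le W_m\le 1$, with $W_0\equiv 1$, such that $\sum_{p\in\mathcal P_J,|p|=m}||U[p]g||_2^2\le\int_\R|\hat g(\omega)|^2 W_m(\omega)\,d\omega$ for every real $g$. Using the identity $U[(j_1,\dots,j_m)]g=U[(j_2,\dots,j_m)](|g\star\psi_{j_1}|)$ one rewrites $\sum_{|p|=m}||U[p]g||_2^2=\sum_{j\le J}\sum_{|p'|=m-1}||U[p'](|g\star\psi_j|)||_2^2$; as each $|g\star\psi_j|$ is again real-valued, the inductive hypothesis applies to it, and the whole induction reduces to a single one-step inequality of the form $\sum_{j\le J}\int_\R|\widehat{|g\star\psi_j|}|^2\,W_{m-1}\,d\omega\le\int_\R|\hat g|^2\,W_m\,d\omega$.

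Everything therefore rests on this one ``frequency-lowering'' lemma for a single application of $g\mapsto|g\star\psi_j|$, and this is where I expect the real difficulty to lie. The mechanism, described heuristically in Section~\ref{s:scattering_transform}, is that $g\star\psi_j$ concentrates its spectrum near $\pm2^{-j}$ while $|g\star\psi_j|$, being nonnegative, has its spectrum pulled back towards the origin; hence the high-frequency content of the output can only originate from still-higher-frequency content of $g$, which is exactly the direction needed to let $W_m$ be a \emph{wider} low-frequency notch than $W_{m-1}$. To make this quantitative without assuming analyticity I would study the spectrum $\widehat{|g\star\psi_j|^2}$, which is the autocorrelation of $\hat g\,\hat\psi_j$ and is for that reason automatically concentrated near $0$, and then control the genuinely nonlinear passage from $|g\star\psi_j|^2$ to $|g\star\psi_j|$. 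Condition~\eqref{eq:smaller_negs}, and its strictness, enters precisely here: the excess of positive- over negative-frequency mass in each wavelet is what forces a net downward displacement of the spectrum at each step, and summing the per-scale estimates against the Littlewood--Paley inequality \eqref{eq:exp_decay_lp} is what converts a fixed per-step contraction factor into a geometric widening of the admissible low-frequency band.

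Finally I would arrange the bookkeeping so that the weights are themselves Gaussian high-pass filters, $W_m=1-|\hat\chi_{\beta_m}|^2$ with $\beta_m=ra^{m}$; the Gaussian family is convenient because a product $\hat\chi_a\hat\chi_b$ is again a Gaussian, which lets the one-step lemma be phrased as a clean relation between $\beta_{m-1}$ and $\beta_m$, and the multiplicative growth $\beta_m=a\beta_{m-1}$ is exactly the geometric rate produced above. Condition~\eqref{eq:wavelet_in_zero} does the remaining work: because $\hat\psi(\omega)=O(|\omega|^{1+\epsilon})$, the coarse wavelets $\psi_j$ (with $j\le J$, so that $j\to-\infty$ contributes $\hat\psi(2^j\omega)$) have negligible response near $\omega=0$, so the energy that the modulus deposits at very low frequencies is not recaptured by the cascade and genuinely escapes; this is what allows $W_m$ to decay to $0$ on $[-\beta_m,\beta_m]$ rather than the recursion stalling. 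Choosing $a>1$ as the reciprocal of the one-step contraction factor and $r>0$ small enough that the first cases $m=0,1$ still fit under $1-|\hat\chi_{ra^m}|^2$ (whence the restriction $n\ge 2$), the induction yields $W_n\le 1-|\hat\chi_{ra^n}|^2$ and hence \eqref{eq:exp_decay}. The main obstacle, to reiterate, is the nonlinear one-step estimate: bounding the high-frequency content of $|g\star\psi_j|$ by that of $g$ at a strictly coarser scale using only the asymmetry~\eqref{eq:smaller_negs}, with no analyticity available.
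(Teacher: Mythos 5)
Your high-level architecture matches the paper's: induction on the path length with frequency weights $W_m=1-|\hat\chi_{ra^m}|^2$, reduction to a one-step inequality, and the observation that everything hinges on quantifying the downward spectral displacement caused by the modulus. But the two places where you yourself locate the difficulty are exactly the places where the proposal has no proof, and the route you sketch for the main one is not the one that works. You propose to analyse $\widehat{|g\star\psi_j|^2}$ as the autocorrelation of $\hat g\,\hat\psi_j$ and then ``control the nonlinear passage'' from $|g\star\psi_j|^2$ to $|g\star\psi_j|$; that square-root step is genuinely hard and is not what the paper does. The paper's key device (Lemma~\ref{lem:low_freq_shift}) avoids the nonlinearity entirely: since $\chi_x>0$ pointwise, $|h|\star\chi_x=|he^{-2\pi i\delta\cdot}|\star\chi_x\geq|(he^{-2\pi i\delta\cdot})\star\chi_x|$, so $\|\,|h|\star\chi_x\|_2^2\geq\int|\hat h(\omega)|^2|\hat\chi_x(\omega-\delta)|^2d\omega$ for \emph{every} shift $\delta$ --- the modulus lets you recenter the low-pass window anywhere for free. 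The quantitative content then comes from choosing $\delta_j=\delta 2^{-j}$ and running a concavity (Jensen) argument in the Gaussian exponent (Lemma~\ref{lem:exp_decay_deltas}); hypothesis~\eqref{eq:smaller_negs} enters there to make the first moment $F_1(\omega)=\sum_j\tfrac12\frac{|\hat\psi_j(\omega)|^2-|\hat\psi_j(-\omega)|^2}{S(\omega)}2^{-j}$ strictly positive and homogeneous, yielding $1-|\hat\chi_{ax}|^2$ with $a=(1-c^2/C)^{-1/2}>1$. Without this shift-invariance trick (or an equivalent), your one-step lemma remains an open claim, so the induction does not close.

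The base case is also mishandled. You suggest the restriction $n\geq 2$ comes from choosing $r$ small enough that $m=0,1$ ``fit under'' $1-|\hat\chi_{ra^m}|^2$; this cannot work, because $1-|\hat\chi_{ra^m}(\omega)|^2<1$ for every $\omega$ while the Littlewood--Paley sum $\tfrac12\sum_{j\leq J}(|\hat\psi_j(\omega)|^2+|\hat\psi_j(-\omega)|^2)$ may equal $1$ at frequencies away from $0$ (e.g.\ band-limited wavelets with a unitary transform), so no $r>0$ makes the $n=1$ bound hold and the induction cannot be seeded at $n=1$ with a Gaussian weight. The paper instead initializes at $n=2$ via two extra constructions: a positive, band-limited low-pass filter $\phi_0=\gamma^2$ whose squared Fourier transform dominates the Littlewood--Paley deficit after dilation (Lemma~\ref{lem:positive_low_pass}, which is where hypothesis~\eqref{eq:wavelet_in_zero} is used to get the $o(\omega^2)$ behaviour at the origin), followed by an averaging of the shift $\delta$ against a Gaussian density to replace $|\hat\phi|^2$ by $|\hat\phi|^2\star c$ and recover a genuine Gaussian lower bound at infinity (Lemma~\ref{lem:gaussian_bound}). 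Both the shift-averaging idea and the reason $n\geq 2$ is needed are absent from the proposal.
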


\nl
Before turning to the proof of this theorem, let us briefly comment the conditions~\eqref{eq:exp_decay_lp},~\eqref{eq:smaller_negs} and~\eqref{eq:wavelet_in_zero}.

The Littlewood-Paley inequality~\eqref{eq:exp_decay_lp} is equivalent to the fact that the wavelet transform is contractive over $L^2(\R,\R)$. Without a condition of this kind, the wavelet transform can amplify some frequencies of the initial signal, and the energy contained in the long paths of length $n$ will not necessarily decrease when $n$ increases.

The condition~\eqref{eq:smaller_negs} describes the fact that, although they do not need to be analytical, the wavelets must give more weight to the positive frequencies than to the negative ones. If this condition is not met, the phenomenon of energy shift towards the low frequencies may not happen. This is in particular the case if the wavelets are real.

The condition~\eqref{eq:wavelet_in_zero} states that the wavelets have a bit more than one zero momentum. It is necessary for our proof, but it is not clear whether the theorem stays true without it or not.

\section{Principle of the proof\label{s:exp_decay_proof}}

The proof proceeds by iteration over $n$.

\nl
In this paragraph, we show that, if $a$ is close enough to $1$, the fact that the property holds for $n$ implies that it also holds for $n+1$. The initialization, for $n=2$, relies on the same principle, but is more technical; it is done in the paragraph~\ref{ss:exp_decay_init}.

\begin{lem}\label{lem:low_freq_shift}
For any $j\in\Z,x\in\R^*_+,\delta_j\in\R$:
\begin{equation*}
||\,|f\star\psi_j|\star\chi_x||_2^2 \geq ||f\star\psi_j\star(\chi_x e^{2\pi i\delta_j.})||_2^2=\int_\R|\hat f(\omega)|^2|\hat\psi_j(\omega)|^2 |\hat\chi_x(\omega-\delta_j)|^2d\omega
\end{equation*}
\end{lem}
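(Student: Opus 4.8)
The plan is to treat the two assertions separately: the right-hand equality is a direct Fourier-side computation, while the inequality is the substantive point and follows from a pointwise triangle inequality that crucially exploits the positivity of the Gaussian $\chi_x$.

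For the equality, I would first record that modulating in time corresponds to translating in frequency, so that $\widehat{\chi_x e^{2\pi i\delta_j\cdot}}(\omega)=\hat\chi_x(\omega-\delta_j)$. Since $\widehat{f\star\psi_j}=\hat f\,\hat\psi_j$ and convolution becomes multiplication on the Fourier side, the function $f\star\psi_j\star(\chi_x e^{2\pi i\delta_j\cdot})$ has Fourier transform $\hat f(\omega)\hat\psi_j(\omega)\hat\chi_x(\omega-\delta_j)$. Applying Plancherel then yields
\[
\|f\star\psi_j\star(\chi_x e^{2\pi i\delta_j\cdot})\|_2^2=\int_\R|\hat f(\omega)|^2|\hat\psi_j(\omega)|^2|\hat\chi_x(\omega-\delta_j)|^2\,d\omega,
\]
which is exactly the claimed expression.

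For the inequality, set $g=f\star\psi_j$ and compare the two functions pointwise before taking norms (all integrals converge absolutely, since $g\in L^2(\R)$ and $\chi_x$ is Schwartz). Factoring the modulation out of the convolution gives, for every $t\in\R$,
\[
\big(g\star(\chi_x e^{2\pi i\delta_j\cdot})\big)(t)=e^{2\pi i\delta_j t}\int_\R g(u)\,\chi_x(t-u)\,e^{-2\pi i\delta_j u}\,du.
\]
Taking absolute values and using that $\chi_x\geq 0$ together with $|e^{-2\pi i\delta_j u}|=1$, the triangle inequality for integrals gives
\[
\big|\big(g\star(\chi_x e^{2\pi i\delta_j\cdot})\big)(t)\big|\leq\int_\R|g(u)|\,\chi_x(t-u)\,du=\big(|g|\star\chi_x\big)(t).
\]
Both sides are nonnegative, so squaring and integrating over $t\in\R$ preserves the inequality and yields $\||g|\star\chi_x\|_2^2\geq\|g\star(\chi_x e^{2\pi i\delta_j\cdot})\|_2^2$, the desired bound.

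The proof is short, and the only real subtlety I would flag is that the pointwise bound hinges entirely on $\chi_x$ being real and nonnegative: it is the positivity of the averaging kernel that lets the modulus be pulled inside the convolution without loss, so the argument would fail for a general (signed or complex) averaging function. The role of $\delta_j$ is simply that of a free frequency shift; the inequality holds for every choice, and $\delta_j$ will be selected later in the induction so that the Gaussian $|\hat\chi_x(\cdot-\delta_j)|^2$ is centred where the spectrum of $f\star\psi_j$ concentrates, thereby capturing the shift of energy towards low frequencies described informally in Section~\ref{s:scattering_transform}.
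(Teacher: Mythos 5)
Your proof is correct and follows essentially the same route as the paper: both arguments reduce the claim to the pointwise bound $|g\star(\chi_x e^{2\pi i\delta_j\cdot})(t)|\leq (|g|\star\chi_x)(t)$ obtained from the triangle inequality and the positivity of $\chi_x$, with the modulation absorbed by the modulus, and both obtain the equality from Plancherel. The only difference is cosmetic (you factor the modulation out of the convolution and then bound, whereas the paper inserts the modulation inside the modulus and reads the same chain in the other direction).
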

\noindent
This lemma is already present in \citep{group_invariant}. For sake of completeness, we prove it again in the paragraph~\ref{ss:low_freq_shift}.

If the property~\eqref{eq:exp_decay} holds for $n\geq 2$, then:
\begin{align}
\underset{p\in\mathcal{P}_J,|p|=n+1}{\sum}||U[p]f||_2^2
&=\underset{j\leq J}{\sum}\left(\underset{p\in\mathcal{P}_J,|p|=n}{\sum}||U[j,p]f||_2^2\right)\nonumber\\
&=\underset{j\leq J}{\sum}\left(\underset{p\in\mathcal{P}_J,|p|=n}{\sum}||U[p]\big(|f\star\psi_j|\big)\,||_2^2\right)\nonumber\\
&\leq\underset{j\leq J}{\sum}\Big(||\,|f\star\psi_j|\,||_2^2-||\,|f\star\psi_j|\star\chi_{ra^n}||_2^2\Big)\label{eq:exp_decay_hyp_rec}
\end{align}
By Lemma~\ref{lem:low_freq_shift}, for any choice of $(\delta_j)_{j\leq J}$:
\begin{align*}
\underset{p\in\mathcal{P}_J,|p|=n+1}{\sum}||U[p]f||_2^2
&\leq \underset{j\leq J}{\sum}\Big(||f\star\psi_j||_2^2-||f\star\psi_j\star(\chi_{ra^n}e^{2\pi i\delta_j})||_2^2\Big)\\
&=\int_\R|\hat f(\omega)|^2\left(\underset{j\leq J}{\sum}|\hat\psi_j(\omega)|^2(1-|\hat\chi_{ra^n}(\omega-\delta_j)|^2) \right)d\omega
\end{align*}
We symmetrize the expression, by taking into account the fact that $f$ is real, so $|\hat f(\omega)|=|\hat f(-\omega)|$:
\begin{align*}
\underset{p\in\mathcal{P}_J,|p|=n+1}{\sum}||U[p]f||_2^2
\leq
\int_\R|\hat f(\omega)|^2\times\frac{1}{2} \bigg(\underset{j\leq J}{\sum}&|\hat\psi_j(\omega)|^2(1-|\hat\chi_{ra^n}(\omega-\delta_j)|^2) \\
+&|\hat\psi_j(-\omega)|^2(1-|\hat\chi_{ra^n}(-\omega-\delta_j)|^2)\bigg)d\omega
\end{align*}
To conclude, it is thus sufficient to show that, if the $\delta_j$'s are well-chosen, then:
\begin{align*}
\frac{1}{2} \bigg(\underset{j\leq J}{\sum}|\hat\psi_j(\omega)|^2&(1-|\hat\chi_{ra^n}(\omega-\delta_j)|^2)
+|\hat\psi_j(-\omega)|^2(1-|\hat\chi_{ra^n}(-\omega-\delta_j)|^2)\bigg)\\
&\leq 1-|\hat\chi_{a r^{n+1}}(\omega)|^2
\hskip 2cm (\forall\omega\in\R)
\end{align*}
which is a direct consequence of the next lemma, proven in the paragraph~\ref{ss:exp_decay_deltas}.
\begin{lem}\label{lem:exp_decay_deltas}
For any $x>0$, if $a>1$ is close enough to $1$, then there exist $(\delta_j)_{j\leq J}$ such that:
\begin{align*}
\forall \omega\in\R\quad\quad
\frac{1}{2}&\left(\underset{j\in\Z}{\sum}|\hat\psi_j(\omega)|^2(1-|\hat\chi_x(\omega-\delta_j)|^2)\right.\\
&\left.+ \underset{j\in\Z}{\sum}|\hat\psi_j(-\omega)|^2(1-|\hat\chi_x(-\omega-\delta_j)|^2)
\right) \leq 1-|\hat\chi_{a x}(\omega)|^2
\end{align*}
\end{lem}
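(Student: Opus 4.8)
The plan is to prove the pointwise inequality directly. Write $g=|\hat\chi_x|^2$ and $h=|\hat\chi_{ax}|^2$; these are even Gaussians with $g\le h$ and $h\to g$ pointwise as $a\to 1^+$. Set $A_j(\omega)=|\hat\psi_j(\omega)|^2$, $B_j(\omega)=|\hat\psi_j(-\omega)|^2$, and $L(\omega)=\frac12\sum_{j}\big(A_j(\omega)+B_j(\omega)\big)$, so that $L\le 1$ by \eqref{eq:exp_decay_lp}. Subtracting the $g$-weighted terms from $L$ turns the claim into
\[
F(\omega):=\frac12\sum_{j\in\Z}\Big(A_j(\omega)\,g(\omega-\delta_j)+B_j(\omega)\,g(-\omega-\delta_j)\Big)\ \ge\ L(\omega)-1+h(\omega).
\]
Both sides are even in $\omega$ (the substitution $\omega\mapsto-\omega$ exchanges the two families of terms while swapping $A_j\leftrightarrow B_j$), so it suffices to treat $\omega\ge 0$. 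I would choose the shifts to track the scale of the wavelets, $\delta_j=2^{-j}\delta$ for a single constant $\delta>0$ tuned to the positive-frequency concentration of $\hat\psi$, so that $g(\,\cdot\,-\delta_j)$ is centred on the band of $\psi_j$. Since $A_j(\omega)=|\hat\psi(2^j\omega)|^2$ concentrates where $\omega\approx\delta_j$, the heuristic is $g(\omega-\delta_j)\approx 1$ on the dominant terms, hence $F(\omega)\approx L(\omega)$ and the inequality reduces to the trivial $1\ge h(\omega)$. The whole difficulty is to make these ``$\approx$'' quantitative and uniform, and three regimes must be separated.

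For $\omega$ large the right-hand side is $\le h(\omega)$, which is exponentially small, while $F\ge 0$; the inequality is then immediate. The first delicate regime is $\omega\to 0$, where the margin degenerates because both sides meet at $\omega=0$ (there $A_j(0)=B_j(0)=|\hat\psi(0)|^2=0$, so $L(0)=F(0)=0$ and $h(0)=1$). Here I would expand to second order: using \eqref{eq:wavelet_in_zero} to secure absolute convergence of the scale-sum, one obtains $L-F=\frac{2\omega^2}{x^2}\,C(\omega)+o(\omega^2)$ with
\[
C(\omega)=\frac12\sum_{j\in\Z}\left(A_j(\omega)\frac{(2^j\omega-\delta)^2}{(2^j\omega)^2}+B_j(\omega)\frac{(2^j\omega+\delta)^2}{(2^j\omega)^2}\right),
\]
while $1-h(\omega)=\frac{2\omega^2}{(ax)^2}+o(\omega^2)$. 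Thus it suffices to show $\sup_\omega C(\omega)<1$, for then any $a>1$ with $a^2<1/\sup_\omega C$ settles a neighbourhood of $0$. This is exactly where \eqref{eq:smaller_negs} is indispensable: writing $B_j=A_j-d_j$ with $d_j\ge 0$, the asymmetry contributes the strictly negative term $-\frac12\sum_j d_j\,(2^j\omega+\delta)^2/(2^j\omega)^2$, which lowers $C$, and tuning $\delta$ brings $\sup_\omega C$ strictly below $1$. For real wavelets $d_j\equiv 0$, no such gain exists, consistent with the failure of the phenomenon noted after the theorem. Because the grid $\{2^j\omega\}$ is $\log$-periodic in $\omega$, a compactness argument upgrades the pointwise strict bound into a uniform one.

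The remaining, and in my view hardest, regime is the intermediate band $\omega\sim x$, where the active wavelets have a band as wide as the Gaussian, so $g(\omega-\delta_j)$ is genuinely bounded away from $1$ and $F<L$ with no small parameter to exploit. There I would again lean on \eqref{eq:smaller_negs}: the well-placed Gaussian $g(\omega-\delta_j)$ carries the larger weight $A_j$ and the badly-placed $g(-\omega-\delta_j)$ the smaller weight $B_j$, and this asymmetry, combined with the Littlewood--Paley slack $1-L(\omega)\ge 0$ from \eqref{eq:exp_decay_lp}, should yield a strict inequality with a margin that is uniform on this compact band.

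Finally, collecting the regimes gives the inequality at $a=1$ with a lower bound on the margin: away from $0$ it is bounded below by a fixed multiple of $1-g$, while near $0$ the comparison of $\omega^2$-coefficients already incorporates the room afforded by $a>1$. Letting $a\to 1^+$ and using that $h_a-h_1=O\big((a-1)(1-g)\big)$ uniformly on compact sets, the perturbation is absorbed by the margin for all $a$ sufficiently close to $1$, which is the assertion. The main obstacle throughout is to make a single choice of $\delta$ (hence of all $\delta_j$) serve the low-frequency, intermediate, and high-frequency regimes at once, while extracting from \eqref{eq:smaller_negs} a gain that is quantitative and uniform in $\omega$.
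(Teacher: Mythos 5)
There is a genuine gap. Your choice $\delta_j=2^{-j}\delta$ and your second-order quantity $C(\omega)$ near $\omega=0$ are exactly right (in the paper's notation $C(\omega)=S(\omega)\bigl(1-2\delta F_1(\omega)/\omega+\delta^2F_2(\omega)/\omega^2\bigr)$, and tuning $\delta$ does give $\sup_\omega C<1$). But the two other regimes are not handled. First, the large-$\omega$ regime is \emph{not} immediate: from $F\geq 0$ and $L-1+h\leq h$ you cannot conclude $F\geq L-1+h$, because $h>0$; when the Littlewood--Paley inequality is saturated ($L(\omega)=1$, e.g.\ a tight frame), the claim becomes $F(\omega)\geq h(\omega)$, a comparison of two exponentially small quantities whose exponents differ only by the factor $a^2$ and by the effect of the shifts $\delta_j$ --- this is the entire content of the lemma, not a triviality. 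Second, for the intermediate band you only assert that the asymmetry \eqref{eq:smaller_negs} together with the slack $1-L\geq 0$ ``should'' give a uniform margin; that slack can be identically zero, and no quantitative mechanism is supplied. So two of your three regimes rest on the same unproved estimate.

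The paper closes this with a single idea that removes the need for any regime splitting: apply Jensen's inequality to the concave map $y\mapsto 1-e^{2(-\omega^2/x^2+y)}$ with weights $\tfrac12|\hat\psi_j(\pm\omega)|^2/S(\omega)$. This collapses the whole sum of shifted Gaussians into one Gaussian whose exponent is $-\tfrac{2}{x^2}\bigl(\omega^2-2\delta\omega F_1(\omega)+\delta^2F_2(\omega)\bigr)$, where $F_1,F_2$ are the weighted averages of $2^{-j}$ and $2^{-2j}$. The homogeneity $F_1(2\omega)=2F_1(\omega)$, $F_2(2\omega)=4F_2(\omega)$ plus continuity and compactness of $[1,2]$ give the \emph{global} bounds $\omega F_1(\omega)\geq c\omega^2$ and $F_2(\omega)\leq C\omega^2$, so with $\delta=c/C$ the exponent improves to $-\tfrac{2\omega^2}{x^2}(1-c^2/C)$ uniformly in $\omega$, and $a\leq(1-c^2/C)^{-1/2}$ together with $S\leq 1$ finishes. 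In effect, the computation you carry out only in the $\omega\to 0$ expansion is made exact at every frequency by convexity; without that step (or a substitute for the large and intermediate regimes), your argument does not go through.
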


The proof is summarized in Figure~\ref{fig:schema_exp_decay}.

\begin{figure}
{\def\arraystretch{0.4}
\begin{tabular}{cccccc}
\multicolumn{6}{l}{
\begin{minipage}{\textwidth}
Inductive hypothesis: the energy contained in the paths of length $n+1$ beginning by $j$ is at most the energy of $f\star\psi_j$, multiplied by a high-pass filter (Equation~\eqref{eq:exp_decay_hyp_rec}).
\end{minipage}}\\&&&&&
\\
$\underset{|p|=n}{\sum}||U[j,p]f||_2^2$&$\leq${\Huge $\int$}
\raisebox{-0.5\height}{
\includegraphics[width=0.16\textwidth]{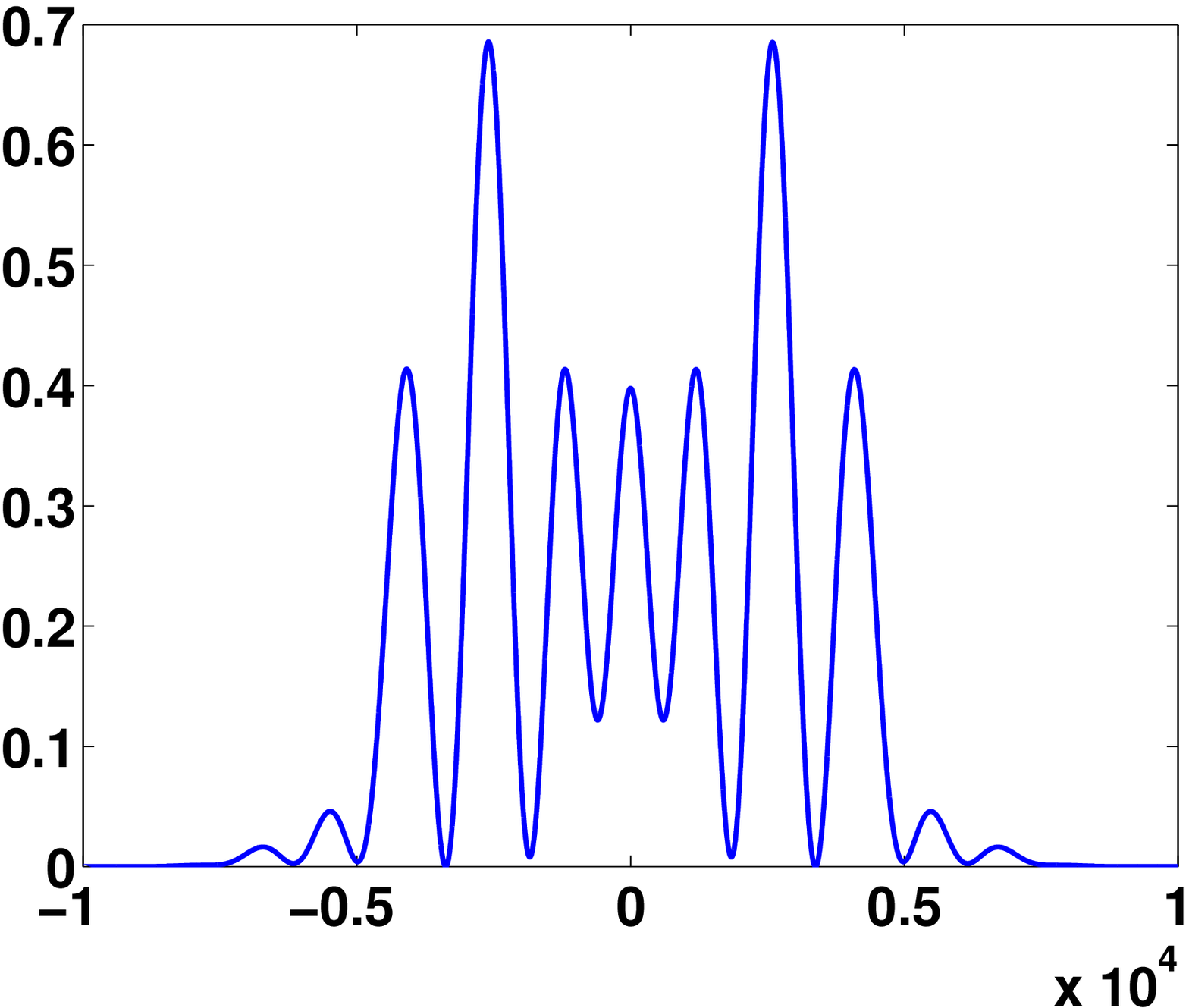}}&
$\times$&
\raisebox{-0.5\height}{
\includegraphics[width=0.16\textwidth]{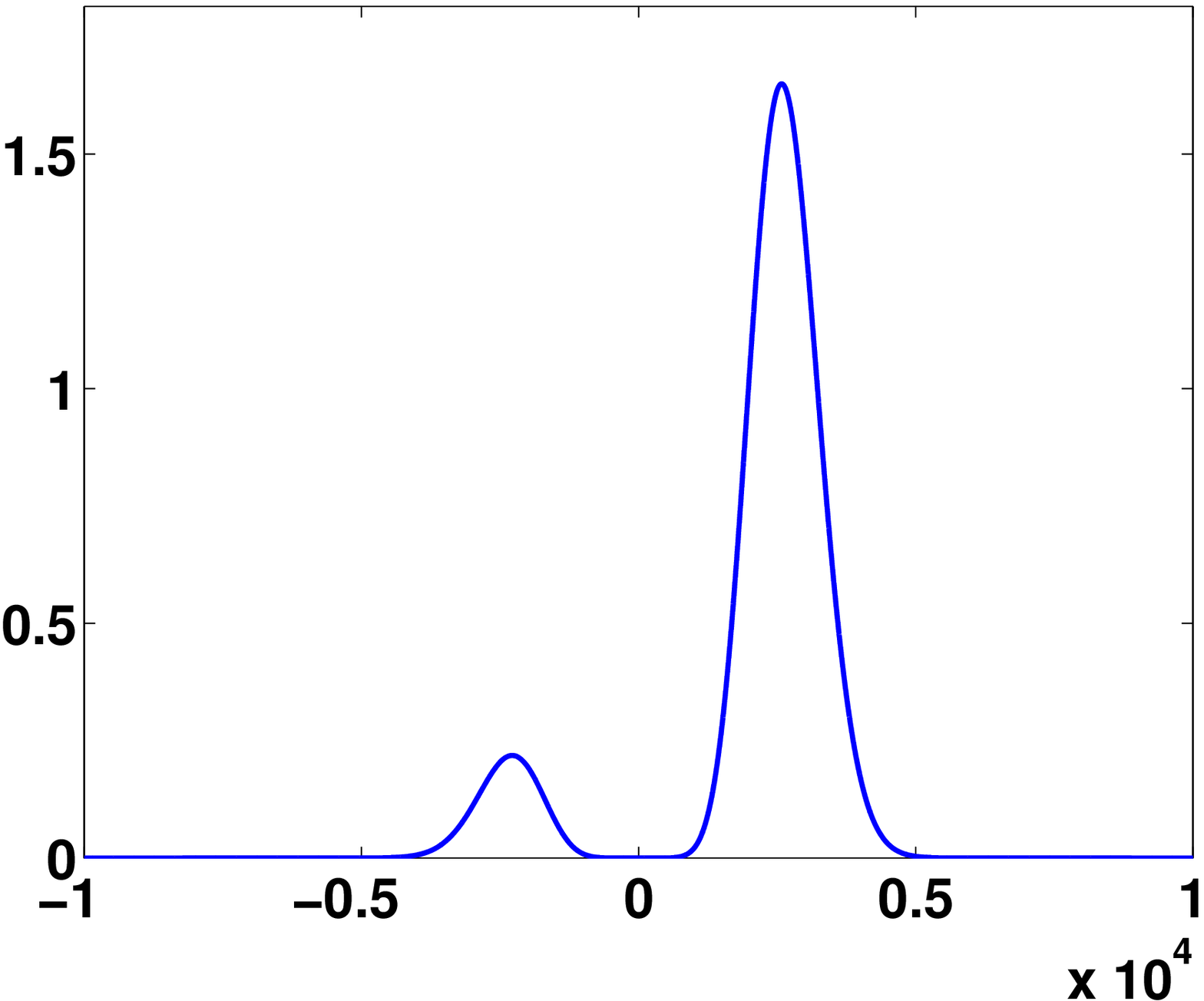}}&
$\times$&\hskip -1cm
\raisebox{-0.5\height}{
\includegraphics[width=0.16\textwidth]{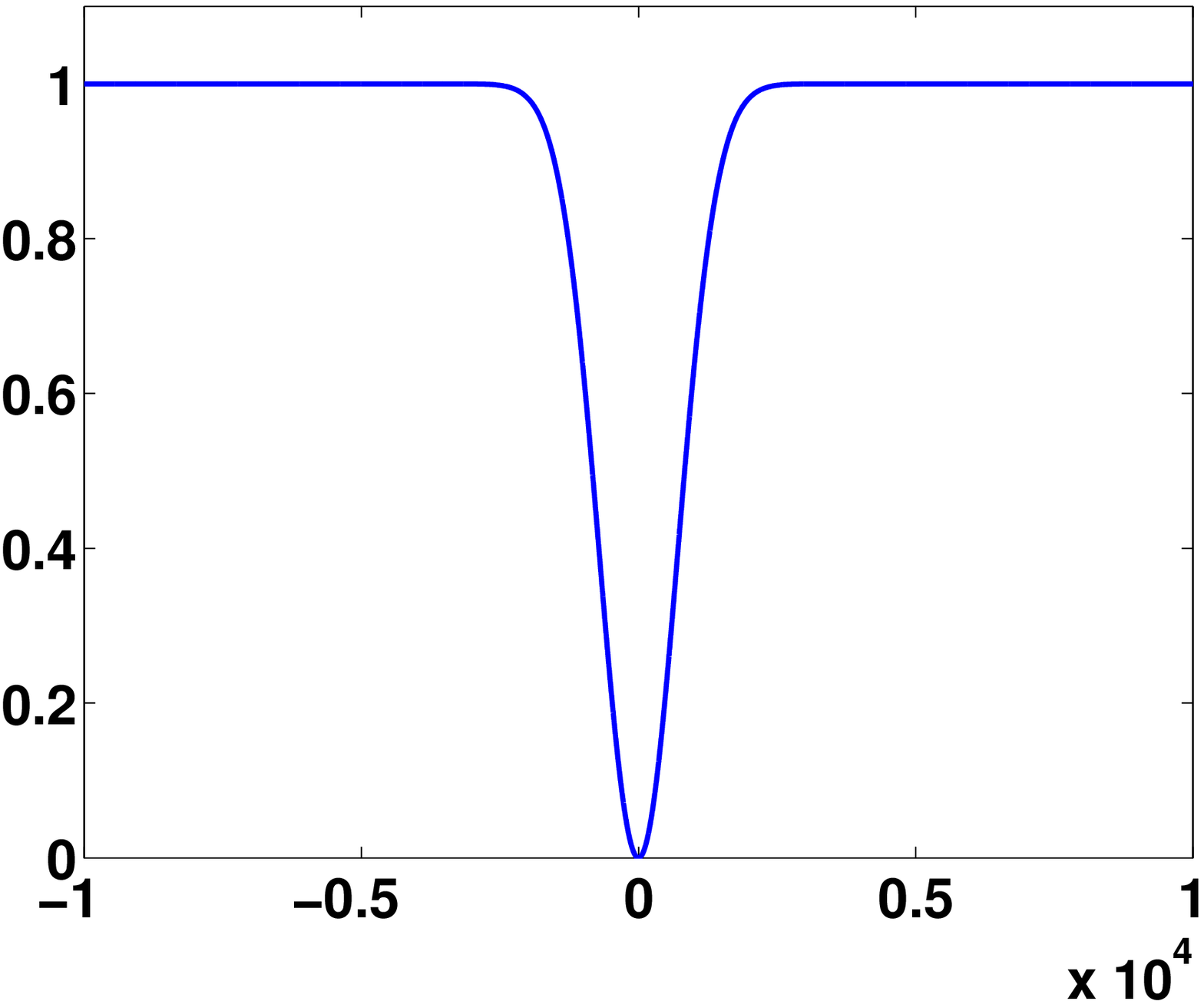}}\\&&&&&\\
&\hskip 1.5cm$|\hat f|^2$&&\hskip 1.2cm$|\hat\psi_j|^2$&&\hskip -0.4cm$1-|\hat\chi_{ar^n}|^2$\\&&&&&\\
\multicolumn{6}{l}{
Because of the modulus around $f\star\psi_j$, the high-pass filter can be arbitrarily shifted in frequency.}\\&&&&&\\
$\underset{|p|=n}{\sum}||U[j,p]f||_2^2$&$\leq${\Huge $\int$}
\raisebox{-0.5\height}{
\includegraphics[width=0.16\textwidth]{exp_decay_ff.eps}}&$\times$&
\raisebox{-0.5\height}{
\includegraphics[width=0.16\textwidth]{exp_decay_psi3.eps}}&
$\times$&\hskip -1cm
\raisebox{-0.5\height}{
\includegraphics[width=0.16\textwidth]{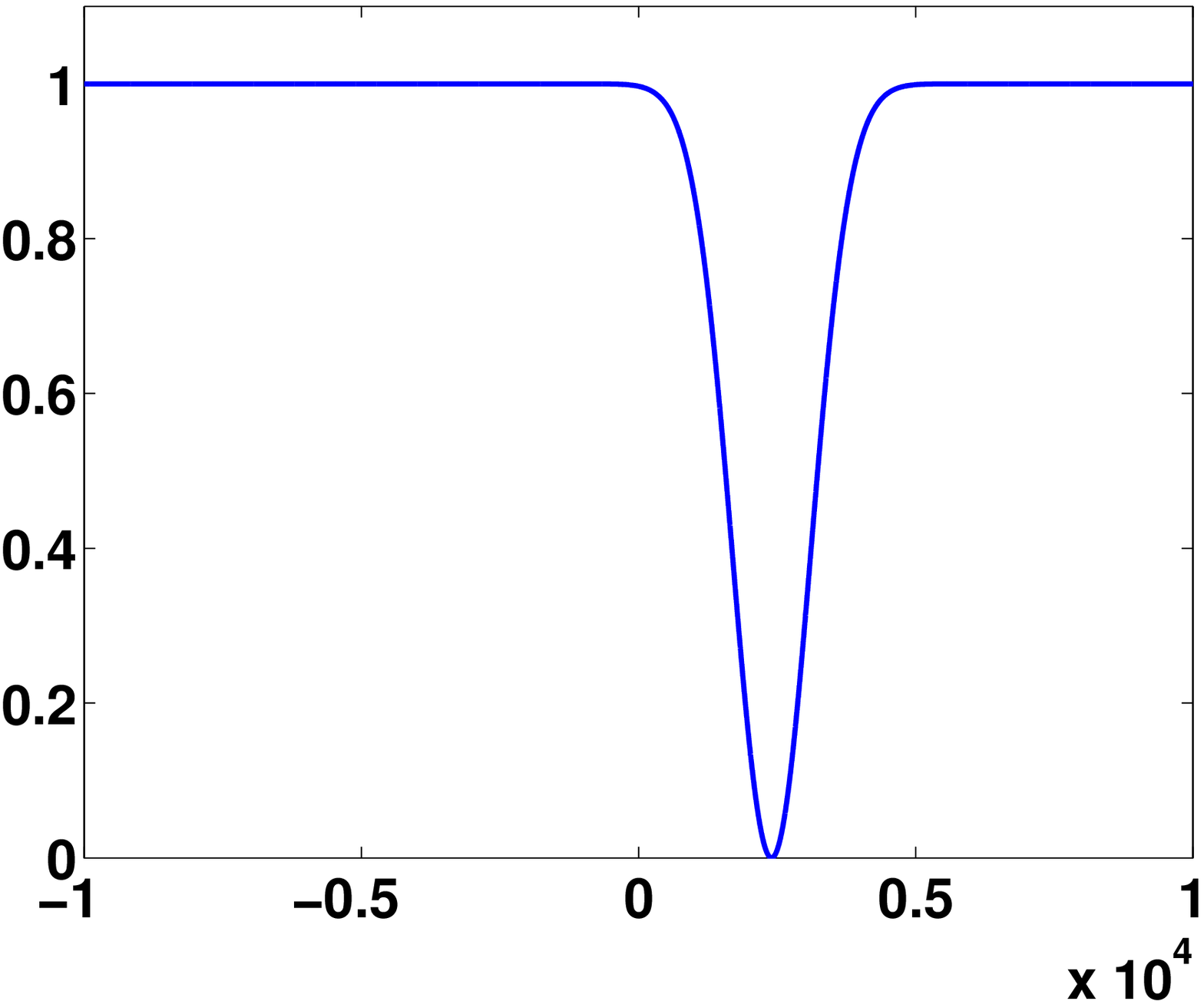}}\\
&\hskip 1.5cm$|\hat f|^2$&&\hskip 1.2cm$|\hat\psi_j|^2$&&\hskip -0.4cm$1-|\hat\chi_{ar^n}(.-\delta_j)|^2$\\&&&&&\\
&$=${\Huge $\int$}
\raisebox{-0.5\height}{
\includegraphics[width=0.16\textwidth]{exp_decay_ff.eps}}&$\times$
&
\multicolumn{3}{c}{
\raisebox{-0.5\height}{
\includegraphics[width=0.16\textwidth]{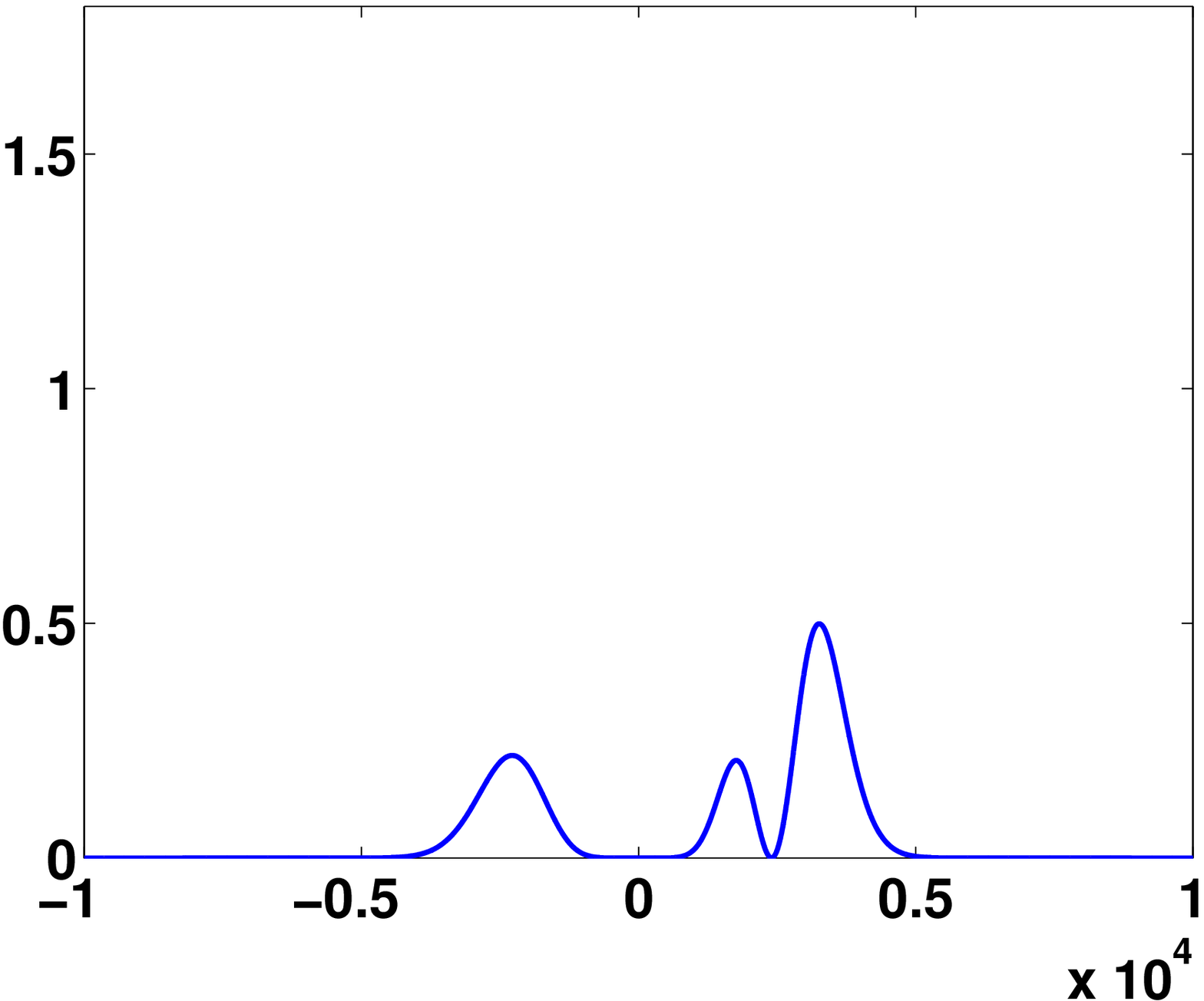}}}\\&&&&
\\
&\hskip 1.5cm$|\hat f|^2$&&\multicolumn{3}{c}{\hskip 1.2cm$|\hat\psi_j|^2(1-|\hat\chi_{ar^n}(.-\delta_j)|^2)$}\\&&&&\\
\multicolumn{6}{l}{
We consider real signals so we can symmetrize the filter.}\\&&&&&\\
&$=${\Huge $\int$}
\raisebox{-0.5\height}{
\includegraphics[width=0.16\textwidth]{exp_decay_ff.eps}}&$\times$
&
\multicolumn{3}{c}{
\raisebox{-0.5\height}{
\includegraphics[width=0.16\textwidth]{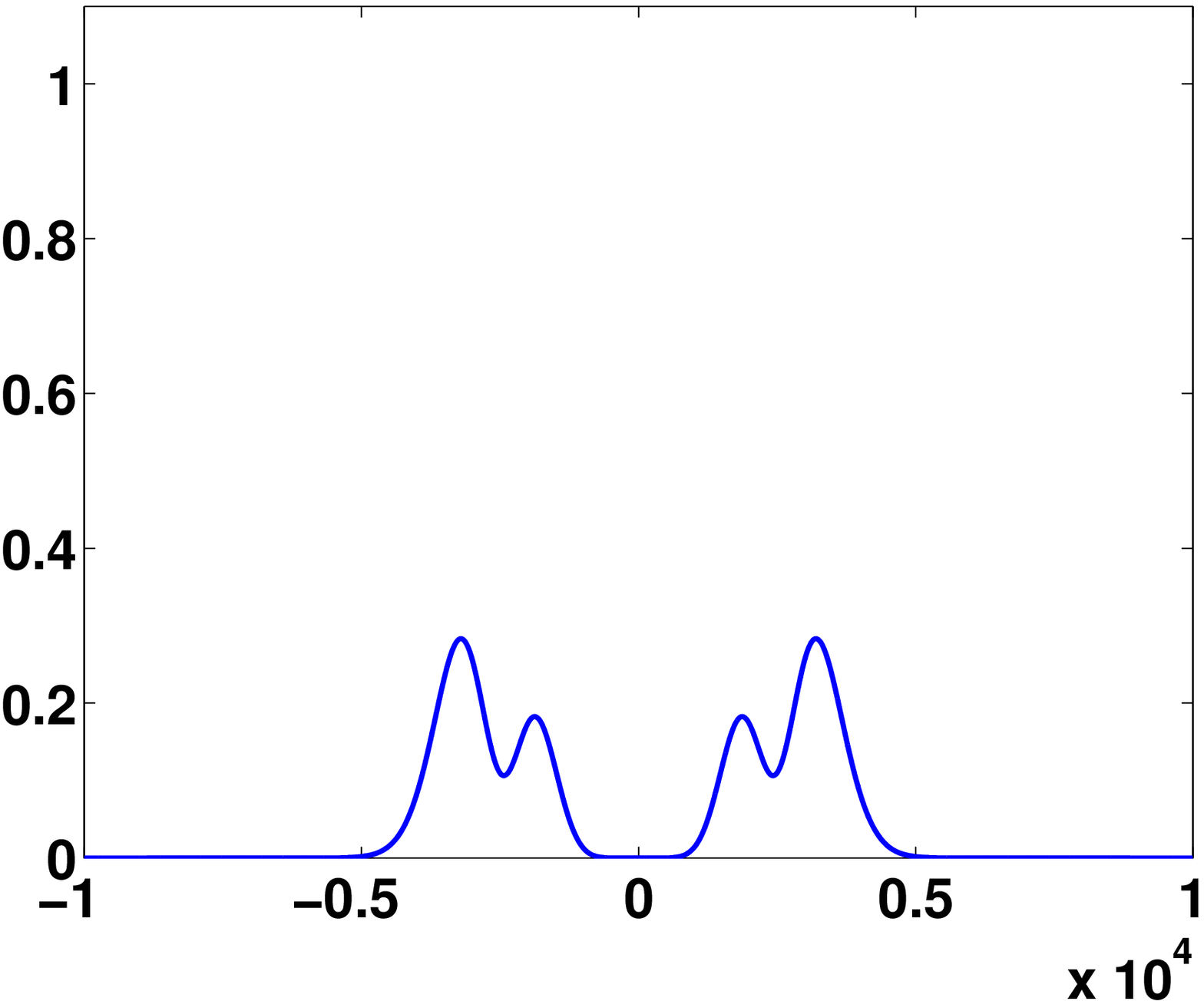}}}\\&&&&&
\\
&\hskip 1.5cm$|\hat f|^2$&&\multicolumn{3}{c}{\footnotesize
\hskip -0.8cm
$\frac{1}{2}\left(|\hat\psi_j|^2(1-|\hat\chi_{ar^n}(.-\delta_j)|^2)
+|\hat\psi_j(-.)|^2(1-|\hat\chi_{ar^n}(-.-\delta_j)|^2)\right)$
}\\
\multicolumn{6}{l}{
Summing over $j$ gives:
}\\&&&&&\\
$\underset{|p|=n+1}{\sum}||U[p]f||_2^2$&$\leq${\Huge $\int$}
\raisebox{-0.5\height}{
\includegraphics[width=0.16\textwidth]{exp_decay_ff.eps}}&$\times$&
\multicolumn{3}{c}{
\raisebox{-0.5\height}{
\includegraphics[width=0.16\textwidth]{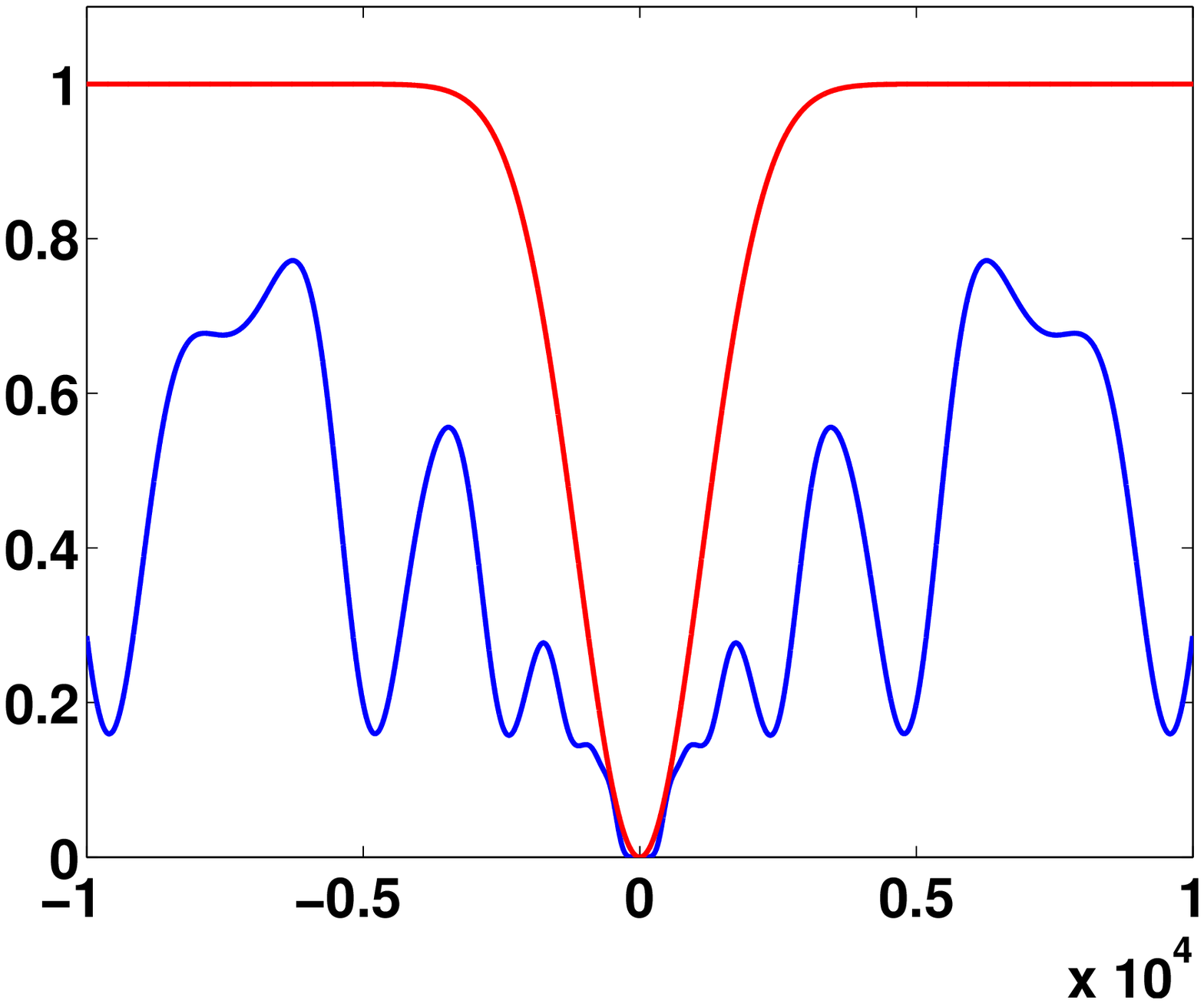}}}\\&&&&&\\
&\hskip 1.5cm$|\hat f|^2$&&\multicolumn{3}{c}{\footnotesize
\hskip -0.8cm
$\underset{j\leq J}{\sum}\frac{1}{2}\left(|\hat\psi_j|^2(1-|\hat\chi_{ar^n}(.-\delta_j)|^2)
+|\hat\psi_j(-.)|^2(1-|\hat\chi_{ar^n}(-.-\delta_j)|^2)\right)$
}\\
&\multicolumn{1}{l}{
$\leq \int$\hskip 1.7cm
$|\hat f|^2$}
&$\times$&
\multicolumn{3}{c}{\hskip 2cm
$1-|\hat\chi_{ar^{n+1}}|^2$ \hskip 1cm(shown in red)}
\end{tabular}}
\caption{Schematic summary of the proof of Theorem~\ref{thm:exp_decay}
\label{fig:schema_exp_decay}}
\end{figure}

\section{Adaptation of the theorem to stationary processes\label{s:exp_decay_adaptation}}

In what follows, $X$ is a real-valued stationary stochastic process, with continuous and integrable autocovariance $R_X$.

The scattering transform of $X$ is defined in the same way as for elements of $L^2(\R)$:
\begin{gather*}
U[\o]X = X\\
\forall n\geq 1, (j_1,...,j_n)\in\Z^n\quad\quad
U[(j_1,...,j_n)]X=|U[(j_1,...,j_{n-1})]X\star \psi_{j_n}|\\
\forall p\in\mathcal{P}_J\quad\quad
S_J[p]f=U[p]f\star\phi_J
\end{gather*}

The results proven for the deterministic wavelet transform tend to be also valid for stationary processes, if one replaces the squared $L^2$-norms by the expectations of the squared modulus. In particular, Theorem~\ref{thm:exp_decay} can be adapted to the case of stationary processes.
\begin{thm}
Let $(\psi_j)_{j\in\Z}$ be a family of wavelets, satisfying the same conditions~\eqref{eq:exp_decay_lp},~\eqref{eq:smaller_negs} and~\eqref{eq:wavelet_in_zero} as in Theorem~\ref{thm:exp_decay}.

Then, for any $J\in\Z$, there exist $r>0,a>1$ such that, for all $n\geq 2$ and $f\in L^2(\R,\R)$:
\begin{equation*}
\underset{p\in\mathcal{P}_J,|p|=n}{\sum}\E(|U[p]X|_2^2)\leq 
\E(|X|^2)-\E(|X\star\chi_{ra^n}|^2)=\int_\R \hat R_X(\omega)(1-|\hat\chi_{ra^n}(\omega)|^2)d\omega
\end{equation*}
\end{thm}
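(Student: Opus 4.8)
The plan is to transcribe the inductive proof of Theorem~\ref{thm:exp_decay} almost verbatim, replacing the squared $L^2$-norm $\|\cdot\|_2^2$ by the power $\E(|\cdot|^2)$ and the deterministic spectral density $|\hat f(\omega)|^2\,d\omega$ by the spectral measure $d\mu_X(\omega)$ of the process. By Bochner's theorem, every real second-order stationary process $Y$ with finite power carries a positive symmetric spectral measure $\mu_Y$ such that $\E(|Y|^2)=\mu_Y(\R)$ and, for any deterministic $g\in L^1\cap L^2(\R)$, $\E(|Y\star g|^2)=\int_\R|\hat g(\omega)|^2\,d\mu_Y(\omega)$; when $R_Y$ is continuous and integrable, as is assumed for the input $X$, one has $d\mu_Y(\omega)=\hat R_Y(\omega)\,d\omega$. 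The reality of $X$ makes $R_X$ even, hence $\mu_X$ symmetric, which is exactly what will license the symmetrization step. First I would record these facts, which already turn the right-hand side into $\E(|X|^2)-\E(|X\star\chi_{ra^n}|^2)=\int_\R(1-|\hat\chi_{ra^n}(\omega)|^2)\,d\mu_X(\omega)=\int_\R\hat R_X(\omega)(1-|\hat\chi_{ra^n}(\omega)|^2)\,d\omega$.

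Next I would establish the stochastic analogue of Lemma~\ref{lem:low_freq_shift}. Its deterministic proof rests on a pointwise bound that survives intact in the random setting: writing $G=X\star\psi_j$ and using $\chi_x\ge 0$, one has, for every realization and every $t$, $\bigl|\,(G\star(\chi_x e^{2\pi i\delta_j\cdot}))(t)\,\bigr|\le(|G|\star\chi_x)(t)$. Taking $\E(|\cdot|^2)$ preserves the inequality, and the spectral identity of the previous paragraph evaluates the modulated side, giving $\E\bigl(|\,|X\star\psi_j|\star\chi_x|^2\bigr)\ge\int_\R|\hat\psi_j(\omega)|^2|\hat\chi_x(\omega-\delta_j)|^2\,d\mu_X(\omega)$. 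Lemma~\ref{lem:exp_decay_deltas} involves only the wavelets and the Gaussians, so it is reused unchanged.

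The induction on $n$ is then carried out uniformly over the class of real second-order stationary processes. For the step from $n$ to $n+1$, I would split paths by their first index and use $U[(j,p)]X=U[p]Y_j$ with $Y_j:=|X\star\psi_j|$, which is again real, stationary and of finite power, since $\E(|Y_j|^2)=\E(|X\star\psi_j|^2)=\int_\R|\hat\psi_j(\omega)|^2\,d\mu_X(\omega)\le 2\,\mu_X(\R)<\infty$ by~\eqref{eq:exp_decay_lp}. Applying the induction hypothesis to each $Y_j$, then the stochastic low-frequency-shift bound to rewrite $\E(|Y_j\star\chi_{ra^n}|^2)$ back in terms of $\mu_X$, yields $\sum_{|p|=n+1}\E(|U[p]X|^2)\le\int_\R\bigl(\sum_{j\le J}|\hat\psi_j(\omega)|^2(1-|\hat\chi_{ra^n}(\omega-\delta_j)|^2)\bigr)\,d\mu_X(\omega)$. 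Symmetrizing against the symmetric measure $\mu_X$ and invoking Lemma~\ref{lem:exp_decay_deltas} with $x=ra^n$ bounds the integrand by $1-|\hat\chi_{ra^{n+1}}(\omega)|^2$, which is precisely the claim at level $n+1$. The base case $n=2$ is obtained by the same substitution in the initialization argument of paragraph~\ref{ss:exp_decay_init}.

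The main obstacle is not the arithmetic, which is identical to the deterministic case, but guaranteeing that the intermediate modulus processes $Y_j=|X\star\psi_j|$ stay inside the class to which the induction hypothesis applies. The point to be careful about is that $Y_j$ need not inherit an absolutely continuous spectral measure from $X$, so I would phrase the whole induction in terms of spectral measures rather than densities: second-order stationarity and finiteness of the power are preserved by both convolution with $\psi_j$ and the modulus, and Bochner's theorem supplies a symmetric spectral measure at every stage, which is all the argument actually uses. The conversion back to $\int_\R\hat R_X(\omega)(1-|\hat\chi_{ra^n}(\omega)|^2)\,d\omega$ is performed only for the original process $X$, whose autocovariance is assumed continuous and integrable.
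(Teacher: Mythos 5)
Your proposal is correct and follows essentially the same route as the paper, which simply transcribes the proof of Theorem~\ref{thm:exp_decay} with $\E(|\cdot|^2)$ in place of $\|\cdot\|_2^2$ and the spectral identity $\E(|Y\star h|^2)=|\E(Y\star h)|^2+\int_\R|\hat h(\omega)|^2\hat R_Y(\omega)\,d\omega$ in place of the Plancherel isometry. Your extra care in phrasing the induction with spectral measures (so that the intermediate processes $|X\star\psi_j|$ need not have integrable autocovariances) is a sensible refinement of a point the paper passes over silently; just note that preservation of stationarity under the modulus uses strict stationarity of $X$, not merely second-order stationarity.
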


The proof of this theorem is exactly the same as the one of Theorem~\ref{thm:exp_decay}. The only lines to be modified are the ones where we use the fact that the Fourier transform is an isometry of $L^2(\R)$. We use instead the property according to which, for any stationary process with continuous and integrable autocovariance $R_Y$, and for any $h\in L^1\cap L^2(\R)$:
\begin{align*}
\E(|Y\star h|^2)
&=|\E(Y\star h)|^2 +
\int_\R|\hat h(\omega)|^2 \hat R_Y(\omega)d\omega\\
&=\left|\E(Y)\times \int_\R h(s)ds\right|^2 +
\int_\R|\hat h(\omega)|^2 \hat R_Y(\omega)d\omega
\end{align*}

\section{Proof of Lemmas\label{s:exp_decay_technical}}

\subsection{Proof of Lemma~\ref{lem:low_freq_shift}\label{ss:low_freq_shift}}

\begin{lem*}[\ref{lem:low_freq_shift}]
For any $j\in\Z,x\in\R^*_+,\delta_j\in\R$:
\begin{equation*}
||\,|f\star\psi_j|\star\chi_x||_2^2 \geq ||f\star\psi_j\star(\chi_x e^{2\pi i\delta_j.})||_2^2=\int_\R|\hat f(\omega)|^2|\hat\psi_j(\omega)|^2 |\hat\chi_x(\omega-\delta_j)|^2d\omega
\end{equation*}
\end{lem*}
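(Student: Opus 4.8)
The plan is to prove the inequality and the equality separately, the inequality being the genuinely useful part. Throughout I write $g=f\star\psi_j$ for brevity; since $f\in L^2(\R)$ and $\psi_j\in L^1\cap L^2(\R)$, we have $g\in L^2(\R)$, and because $\chi_x$ is a Schwartz function every convolution and Fourier manipulation below is legitimate.

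For the inequality, the key observation is that the Gaussian $\chi_x$ is real and \emph{positive}, so that $|\chi_x(u)e^{2\pi i\delta_j u}|=\chi_x(u)$ for every $u\in\R$. Writing out the convolution and applying the triangle inequality for integrals then gives, for every $t\in\R$:
\[
\big|g\star(\chi_x e^{2\pi i\delta_j\cdot})(t)\big|=\left|\int_\R g(s)\,\chi_x(t-s)\,e^{2\pi i\delta_j(t-s)}\,ds\right|\leq\int_\R|g(s)|\,\chi_x(t-s)\,ds=(|g|\star\chi_x)(t).
\]
Since both sides are nonnegative, squaring and integrating in $t$ preserves the inequality, which yields $||\,|g|\star\chi_x||_2^2\geq||g\star(\chi_x e^{2\pi i\delta_j\cdot})||_2^2$. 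This is exactly the mechanism of the modulus: once $g$ has been replaced by $|g|$, a complex oscillation in the averaging filter can only shrink the output.

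For the equality, I would invoke Plancherel together with the standard Fourier rules, under the convention $\hat h(\omega)=\int_\R h(t)e^{-2\pi i\omega t}\,dt$ fixed by the given formula for $\hat\chi_a$. The convolution theorem gives $\widehat{g\star(\chi_x e^{2\pi i\delta_j\cdot})}=\hat g\cdot\widehat{\chi_x e^{2\pi i\delta_j\cdot}}$, while modulation in time becomes translation in frequency, $\widehat{\chi_x e^{2\pi i\delta_j\cdot}}(\omega)=\hat\chi_x(\omega-\delta_j)$. Combining these with $\hat g=\hat f\,\hat\psi_j$ and Plancherel yields
\[
||g\star(\chi_x e^{2\pi i\delta_j\cdot})||_2^2=\int_\R|\hat g(\omega)|^2\,|\hat\chi_x(\omega-\delta_j)|^2\,d\omega=\int_\R|\hat f(\omega)|^2\,|\hat\psi_j(\omega)|^2\,|\hat\chi_x(\omega-\delta_j)|^2\,d\omega,
\]
which is the claimed identity.

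There is no serious obstacle here: the hard part is a one-line application of the triangle inequality, the whole content being that taking the modulus of $g$ and the modulus of the filter can only increase the $L^2$ norm. The only point deserving care is bookkeeping --- checking that $g$ and the shifted Gaussian are regular enough for Plancherel and the convolution theorem to apply --- which is immediate since $\chi_x$ is Schwartz and $g\in L^2(\R)$.
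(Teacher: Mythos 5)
Your proof is correct and follows essentially the same route as the paper's: the inequality comes from the positivity of $\chi_x$ combined with the triangle inequality for the convolution integral (the paper phrases this by first inserting a harmless modulation $e^{-2\pi i\delta_j\cdot}$ inside the modulus and then commuting it out, but the mechanism is identical), and the equality is Plancherel plus the modulation--translation rule. Nothing is missing.
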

\begin{proof}
For any $t$, because $\chi_x$ is a positive function:
\begin{align*}
|f\star\psi_j|\star\chi_x(t)
&=|(f\star\psi_j)e^{-2\pi i\delta_j.}|\star\chi_x(t)\\
&=|(f\star\psi_j)e^{-2\pi i\delta_j.}|\star|\chi_x|(t)\\
&\geq|((f\star\psi_j)e^{-2\pi i\delta_j.})\star\chi_x(t)|\\
&=|e^{-2\pi i \delta_j t}\big(f\star\psi_j\star(\chi_x\star e^{2\pi i\delta_j.})\big)(t)|\\
&=|f\star\psi_j\star(\chi_x\star e^{2\pi i\delta_j.})(t)|
\end{align*}
This implies the inequality. The equality is a consequence of the unitarity of the Fourier transform.
\end{proof}

\subsection{Proof of Lemma~\ref{lem:exp_decay_deltas}\label{ss:exp_decay_deltas}}

\begin{lem*}[\ref{lem:exp_decay_deltas}]
For any $x>0$, if $a>1$ is close enough to $1$, then there exist $(\delta_j)_{j\leq J}$ such that:
\begin{align*}
\forall \omega\in\R\quad\quad
\frac{1}{2}&\left(\underset{j\in\Z}{\sum}|\hat\psi_j(\omega)|^2(1-|\hat\chi_x(\omega-\delta_j)|^2)\right.\\
&\left.+ \underset{j\in\Z}{\sum}|\hat\psi_j(-\omega)|^2(1-|\hat\chi_x(-\omega-\delta_j)|^2)
\right) \leq 1-|\hat\chi_{a x}(\omega)|^2
\end{align*}
\end{lem*}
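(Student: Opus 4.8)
The plan is to prove the inequality pointwise in $\omega$ by choosing each $\delta_j$ so that the Gaussian high-pass filter $1-|\hat\chi_x(\cdot-\delta_j)|^2$ is as small as possible precisely where the weight $|\hat\psi_j|^2$ lives. Fix $\omega\in\R$; by symmetry of the desired statement under $\omega\mapsto-\omega$ I may assume $\omega\geq 0$. The target right-hand side is $1-|\hat\chi_{ax}(\omega)|^2$, while the Littlewood-Paley inequality~\eqref{eq:exp_decay_lp} tells me the total mass $\frac12\sum_j(|\hat\psi_j(\omega)|^2+|\hat\psi_j(-\omega)|^2)$ is at most $1$. So the gap I must create is essentially the difference between $|\hat\chi_{ax}(\omega)|^2$ (what I am allowed to subtract on the right) and the averaged $\frac12\sum_j(|\hat\psi_j(\omega)|^2|\hat\chi_x(\omega-\delta_j)|^2+\cdots)$ (what I subtract on the left). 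Since $a>1$ makes $\hat\chi_{ax}$ slightly \emph{narrower} than $\hat\chi_x$, I have a small budget to play with, and the whole point is that the shift $\delta_j$ lets me place the low-pass bump $|\hat\chi_x(\cdot-\delta_j)|^2$ wherever I want.

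First I would record the natural choice $\delta_j=\omega$ for the index $j$ carrying the largest weight, so that $|\hat\chi_x(\omega-\delta_j)|^2=1$ at the frequency $\omega$ in question; more precisely, the honest move is to make $\delta_j$ track the location where $|\hat\psi_j|^2$ is concentrated. Because the wavelets $\psi_j$ are dilates of a single $\psi$, the mass of $|\hat\psi_j|^2$ sits near $|\omega|\sim 2^{-j}$, and condition~\eqref{eq:smaller_negs} guarantees the positive-frequency part dominates, with strict domination for at least one $j$. The key quantitative input is the near-zero vanishing~\eqref{eq:wavelet_in_zero}: $\hat\psi(\omega)=O(|\omega|^{1+\epsilon})$ forces $\sum_j|\hat\psi_j(\omega)|^2$ to vanish like $|\omega|^{2+2\epsilon}$ as $\omega\to0$, so near $\omega=0$ the left-hand weight is genuinely small and the inequality reduces to $0\leq 1-|\hat\chi_{ax}(\omega)|^2$ with room to spare. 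Away from zero, I use the shifts.

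The main computation I would carry out is a two-regime comparison. For $|\omega|$ bounded away from $0$, I choose $\delta_j$ (for each $j$) equal to the value that minimizes $1-|\hat\chi_x(\omega-\delta_j)|^2$ under the constraint imposed by the reality symmetrization — effectively centering the low-pass on $\omega$ — so each bracket $(1-|\hat\chi_x(\omega-\delta_j)|^2)$ becomes $\leq(1-|\hat\chi_x(0)|^2)+\text{(controlled remainder)}=0+\text{remainder}$. Summing against $|\hat\psi_j|^2$ and using~\eqref{eq:exp_decay_lp} to bound the total weight by $1$, the left-hand side is at most $1$ minus a strictly positive quantity coming from the strict inequality in~\eqref{eq:smaller_negs}; I then check this quantity exceeds $|\hat\chi_{ax}(\omega)|^2$ once $a$ is taken close enough to $1$, since $|\hat\chi_{ax}(\omega)|^2=\exp(-2(\omega/(ax))^2)$ is uniformly small for $|\omega|$ large and the wavelet weights compensate for moderate $\omega$. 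The delicate patching is at the crossover scale, and the honest way to handle all $\omega$ simultaneously is to exhibit a single rule $\delta_j=\delta_j(\omega)$ (piecewise, or $\delta_j$ proportional to $\mathrm{sgn}(\omega)2^{-j}$) and verify the inequality by a compactness/continuity argument on the scale-invariant window $1\leq\omega\leq 2$, exactly as in the admissibility condition~\eqref{eq:admissibility_condition}.

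The hard part will be making the choice of $\delta_j$ uniform in $\omega$ while simultaneously controlling the symmetrized negative-frequency term: shifting $|\hat\chi_x(\cdot-\delta_j)|^2$ to cover $\omega$ necessarily pulls it away from $-\omega$, so the term $|\hat\psi_j(-\omega)|^2(1-|\hat\chi_x(-\omega-\delta_j)|^2)$ is close to its full value $|\hat\psi_j(-\omega)|^2$. This is exactly where~\eqref{eq:smaller_negs} is indispensable: the negative weight is dominated by the positive one, so even though I pay nearly full price on the negative side, that price is smaller, and the strict gap from~\eqref{eq:smaller_negs} is what ultimately buys the factor $|\hat\chi_{ax}(\omega)|^2$ needed on the right. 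Balancing these two competing contributions — gain on the positive side, unavoidable loss on the negative side — and showing the net is favorable for $a$ sufficiently near $1$ is the crux; I expect to close it by reducing to the compact window via scale invariance, bounding the infimum of the net gain below by a positive constant, and then choosing $a-1$ smaller than a quantity depending on that constant and on $x$.
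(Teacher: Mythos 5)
There is a genuine gap, and it starts with the quantifier order. The lemma asks for a single family $(\delta_j)$ that works for \emph{all} $\omega$ simultaneously (and indeed, in the induction of Section~\ref{s:exp_decay_proof}, $\delta_j$ is a fixed modulation frequency fed into Lemma~\ref{lem:low_freq_shift}, so it cannot depend on $\omega$). Your central device --- ``centering the low-pass on $\omega$'', or a rule $\delta_j=\delta_j(\omega)$, even the variant $\delta_j\propto\mathrm{sgn}(\omega)2^{-j}$ --- violates this. You flag the uniformization as ``the hard part'' but do not supply the mechanism that resolves it. The paper's resolution is the fixed choice $\delta_j=\delta 2^{-j}$ with one scalar $\delta>0$, combined with a concavity (Jensen) argument: since $y\mapsto 1-e^{2(-\omega^2/x^2+y)}$ is concave, the weighted sum over $j$ of the shifted Gaussian brackets is bounded by $S(\omega)\bigl(1-\exp\bigl(2\bigl(-\tfrac{\omega^2}{x^2}+\tfrac{2\delta\omega}{x^2}F_1(\omega)-\tfrac{\delta^2}{x^2}F_2(\omega)\bigr)\bigr)\bigr)$, where $F_1,F_2$ are the weighted averages of $2^{-j}$ and $2^{-2j}$. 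The homogeneity $F_1(2\omega)=2F_1(\omega)$ plus strictness in~\eqref{eq:smaller_negs} plus compactness of $[1,2]$ give $\omega F_1(\omega)\geq c\omega^2$ and $F_2(\omega)\leq C\omega^2$, and the single value $\delta=c/C$ then yields the bound $S(\omega)\bigl(1-e^{-2\omega^2(1-c^2/C)/x^2}\bigr)\leq 1-|\hat\chi_{ax}(\omega)|^2$ for $a\leq(1-c^2/C)^{-1/2}$. None of this scale-combination machinery appears in your sketch, and without it there is no way to aggregate the per-$j$ gains into a single Gaussian on the right-hand side.

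Two of your supporting claims are also false and would derail the regime analysis. First, the sums in the lemma run over all $j\in\Z$, so the total weight $S(\omega)=\frac12\sum_{j\in\Z}(|\hat\psi_j(\omega)|^2+|\hat\psi_j(-\omega)|^2)$ does \emph{not} vanish as $\omega\to 0$: it is invariant under $\omega\mapsto 2\omega$ and bounded below by a positive constant on $\R^*$. Second, there is no ``room to spare'' near $\omega=0$, because the right-hand side $1-|\hat\chi_{ax}(\omega)|^2\sim 2\omega^2/(ax)^2$ also vanishes there; the near-zero regime is exactly where the argument is delicate, and it is where hypothesis~\eqref{eq:wavelet_in_zero} is actually used --- not to make the weight small, but to make $\sum_j|\hat\psi_j(\pm\omega)|^2\,2^{-2j}$ converge (uniformly on compacts), so that $F_2$ is continuous and satisfies $F_2(\omega)\leq C\omega^2$, which is what forces the left-hand side to be $O(\omega^2)$ at the matching rate. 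Your large-$|\omega|$ observation (LHS $\leq 1$ while $|\hat\chi_{ax}(\omega)|^2$ is tiny) is fine, but it is the easy part.
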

\begin{proof}
We are going to look for $\delta_j$'s of the form $\delta_j=\delta 2^{-j}$.
\begin{align*}
&\frac{1}{2}\left(\underset{j\in\Z}{\sum}|\hat\psi_j(\omega)|^2(1-|\hat\chi_x(\omega-\delta 2^{-j})|^2)+ \underset{j\in\Z}{\sum}|\hat\psi_j(-\omega)|^2(1-|\hat\chi_x(-\omega-\delta 2^{-j})|^2)
\right)\\
=&\frac{1}{2}\left(\underset{j\in\Z}{\sum}|\hat\psi_j(\omega)|^2(1-e^{2\left(-\frac{\omega^2}{x^2}+\frac{2\delta\omega 2^{-j}}{x^2}-\frac{\delta^2 2^{-2j}}{x^2}\right)})+ \underset{j\in\Z}{\sum}|\hat\psi_j(-\omega)|^2(1-e^{2\left(-\frac{\omega^2}{x^2}-\frac{2\delta\omega 2^{-j}}{x^2}-\frac{\delta^2 2^{-2j}}{x^2}\right)})
\right)
\end{align*}
Let us define:
\begin{equation*}
\forall \omega\in\R^*\quad\quad S(\omega)=\frac{1}{2}\left(\underset{j\in\Z}{\sum}|\hat\psi_j(\omega)|^2+\underset{j\in\Z}{\sum}|\hat\psi_j(-\omega)|^2\right)
\end{equation*}
This function is never $0$ if $\omega\ne 0$: if $\omega>0$, there exist $j$ such that $|\hat\psi_j(\omega)|>|\hat\psi_j(-\omega)|\geq 0$ so $S(\omega)>0$; as $S$ is even, we also have $S(\omega)>0$ if $\omega<0$.

The function $y\to 1-e^{2\left(-\frac{\omega^2}{x^2}+y\right)}$ is concave so, for any $\omega\ne 0$:
\begin{align}
&\frac{1}{2}\left(\underset{j\in\Z}{\sum}|\hat\psi_j(\omega)|^2(1-
e^{2\left(-\frac{\omega^2}{x^2}+\frac{2\delta\omega 2^{-j}}{x^2}-\frac{\delta^2 2^{-2j}}{x^2}\right)})+ \underset{j\in\Z}{\sum}|\hat\psi_j(-\omega)|^2(1-e^{2\left(-\frac{\omega^2}{x^2}-\frac{2\delta\omega 2^{-j}}{x^2}-\frac{\delta^2 2^{-2j}}{x^2}\right)})
\right)\nonumber\\
&=S(\omega)\underset{j\in\Z}{\sum}\frac{\frac{1}{2}|\hat\psi_j(\omega)|^2}{S(\omega)}(1-e^{2\left(-\frac{\omega^2}{x^2}+\frac{2\delta\omega 2^{-j}}{x^2}-\frac{\delta^2 2^{-2j}}{x^2}\right)})+ \underset{j\in\Z}{\sum}\frac{\frac{1}{2}|\hat\psi_j(-\omega)|^2}{S(\omega)}(1-e^{2\left(-\frac{\omega^2}{x^2}-\frac{2\delta\omega 2^{-j}}{x^2}-\frac{\delta^2 2^{-2j}}{x^2}\right)})\nonumber\\
&\begin{aligned}
\leq S(\omega)\left(1-\exp\left(2\left(-\frac{\omega^2}{x^2}\right.\right.\right.
&+\underset{j\in\Z}{\sum}\frac{\frac{1}{2}|\hat\psi_j(\omega)|^2}{S(\omega)}\left(\frac{2\delta\omega 2^{-j}}{x^2}-\frac{\delta^2 2^{-2j}}{x^2}\right)\\
&\left.\left.\left.
+ \underset{j\in\Z}{\sum}\frac{\frac{1}{2}|\hat\psi_j(-\omega)|^2}{S(\omega)}\left(-\frac{2\delta\omega 2^{-j}}{x^2}-\frac{\delta^2 2^{-2j}}{x^2}\right)
\right)\right)\right)
\end{aligned}\nonumber\\
&=S(\omega)\left(1-\exp\left(2\left(
-\frac{\omega^2}{x^2} + \frac{2\delta\omega}{x^2} F_1(\omega) -\frac{\delta^2}{x^2}F_2(\omega)
\right)\right)\right)\label{eq:S_1_minus_exp}
\end{align}
if we set:
\begin{gather*}
F_1(\omega)=\underset{j\in\Z}{\sum}\left(\frac{1}{2}\frac{|\hat\psi_j(\omega)|^2-|\hat\psi_j(-\omega)|^2}{S(\omega)}\right)2^{-j}\\
F_2(\omega)=\underset{j\in\Z}{\sum}\left(\frac{1}{2}\frac{|\hat\psi_j(\omega)|^2+|\hat\psi_j(-\omega)|^2}{S(\omega)}\right)2^{-2j}
\end{gather*}
The functions $F_1$ and $F_2$ are both continuous. Indeed, for any $j$, $\hat\psi_j$ is continuous, because $\psi\in L^1(\R)$. The function $S$ is also continuous, and lower-bounded by a strictly positive constant. Finally, the sums converge uniformly on every compact subset of $\R$, because of the assumption~\eqref{eq:wavelet_in_zero}. This implies the continuity.

Because of the hypothesis~\eqref{eq:smaller_negs}, $F_1(\omega)>0$ when $\omega>0$. Moreover, for any $\omega>0$, $F_1(2\omega)=2F_1(\omega)$. By a compacity argument, there exist $c>0$ such that:
\begin{equation*}
\forall \omega>0\quad\quad F_1(\omega)\geq c\omega
\end{equation*}
which, as $F_1$ is odd, implies:
\begin{equation}\label{eq:low_bound_F1}
\forall \omega\in\R\quad\quad \omega F_1(\omega)\geq c\omega^2
\end{equation}
Similarly, there exist $C>0$ such that:
\begin{equation}\label{eq:upp_bound_F2}
\forall\omega\in\R\quad\quad F_2(\omega)\leq C\omega^2
\end{equation}
By combining~\eqref{eq:low_bound_F1} and~\eqref{eq:upp_bound_F2} with~\eqref{eq:S_1_minus_exp}, we get that, for all $\omega\ne 0$:
\begin{align*}
&\frac{1}{2}\left(\underset{j\in\Z}{\sum}|\hat\psi_j(\omega)|^2(1-|\hat\chi_x(\omega-\delta 2^{-j})|^2)+ \underset{j\in\Z}{\sum}|\hat\psi_j(-\omega)|^2(1-|\hat\chi_x(-\omega-\delta 2^{-j})|^2)
\right)\\
&\leq S(\omega)\left(1-\exp\left(-2\frac{\omega^2}{x^2}(1-2c\delta+C\delta^2) \right)\right)
\end{align*}
If we take $\delta = c/C$, this yields, for any $a$ such that $1<a\leq \frac{1}{\sqrt{1-c^2/C}}$:
\begin{align*}
&\frac{1}{2}\left(\underset{j\in\Z}{\sum}|\hat\psi_j(\omega)|^2(1-|\hat\chi_x(\omega-\delta 2^{-j})|^2)+ \underset{j\in\Z}{\sum}|\hat\psi_j(-\omega)|^2(1-|\hat\chi_x(-\omega-\delta 2^{-j})|^2)
\right)\\
&\leq S(\omega)\left(1-\exp\left(-2\frac{\omega^2}{x^2}\left(1-\frac{c^2}{C}\right)\right)\right)\\
&\leq S(\omega)\left(1-\exp\left(-2\frac{\omega^2}{(ax)^2}\right)\right)\\
&\leq 1-\exp\left(-2\frac{\omega^2}{(ax)^2}\right)\\
&= 1- |\hat\chi_{ax}(\omega)|^2
\end{align*}
The bound $S(\omega)\leq 1$ comes from the Littlewood-Paley inequality~\eqref{eq:exp_decay_lp}.

The inequalities are also true for $\omega=0$ because all terms are then equal to zero.
\end{proof}

\subsection{Initialization\label{ss:exp_decay_init}}

In this paragraph, we prove that Theorem~\ref{thm:exp_decay} holds for $n=2$. More precisely, we prove that, for any $a>1$, there exist $r>0$ such that Equation~\eqref{eq:exp_decay} is valid for $n=2$.

\begin{proof}
For any real-valued function $g\in L^2(\R)$:
\begin{align}
\underset{j\leq J}{\sum}||g\star\psi_j||_2^2
&=\int_\R|\hat g(\omega)|^2\left(\underset{j\leq J}{\sum}|\hat\psi_j(\omega)|^2\right)d\omega\nonumber\\
&=\int_\R|\hat g(\omega)|^2\left(\frac{1}{2}\underset{j\leq J}{\sum}|\hat\psi_j(\omega)|^2+|\hat\psi_j(-\omega)|^2\right)d\omega\label{eq:bound_j_leq_J}
\end{align}
If the following inequality held, for some $r>0,a>1$:
\begin{equation*}
\frac{1}{2}\underset{j\leq J}{\sum}\left(|\hat\psi_j(\omega)|^2+|\hat\psi_j(-\omega)|^2\right)\leq 1-|\hat\chi_{ra}(\omega)|^2
\end{equation*}
then Theorem~\ref{thm:exp_decay} would be valid for $n=1$ and it could be proven for $n=2$ in the exact same way as in Section~\ref{s:exp_decay_proof}. Unfortunately, this inequality is not necessarily valid (in particular, it is never the case if the wavelet transform is unitary and the wavelets are band-limited).

The next lemma (proven at the end of the paragraph) nevertheless shows that the inequality is satisfied, if one allows the Gaussian function to be replaced by a more general function.
\begin{lem}\label{lem:positive_low_pass}
There exist a real-valued positive function $\phi\in L^1\cap L^2(\R)$ such that:
\begin{equation}
|\hat\phi(\omega)|^2=1-O(\omega^2)\quad\quad\mbox{when }\omega\to 0
\label{eq:pos_low_pass_zero}
\end{equation}
and:
\begin{equation}
\forall \omega\in\R\quad\quad \frac{1}{2}\underset{j\leq J}{\sum}\left(|\hat\psi_j(\omega)|^2+|\hat\psi_j(-\omega)|^2\right)\leq 1-|\hat\phi(\omega)|^2
\label{eq:pos_low_pass_lp}
\end{equation}
\end{lem}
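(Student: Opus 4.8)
The plan is to reduce the lemma to a statement about the single explicit nonnegative function
\[
g(\omega)\;=\;1-\frac12\sum_{j\le J}\bigl(|\hat\psi_j(\omega)|^2+|\hat\psi_j(-\omega)|^2\bigr),
\]
and then to exhibit $\phi$ by hand, since \eqref{eq:pos_low_pass_lp} is exactly $|\hat\phi(\omega)|^2\le g(\omega)$. First I record the properties of $g$ I will use. By the Littlewood--Paley inequality \eqref{eq:exp_decay_lp} one has $0\le g\le 1$; $g$ is even; and, exactly as in the continuity argument for $F_1,F_2$, the series defining $g$ converges uniformly on compact sets (because \eqref{eq:wavelet_in_zero} controls the large-$|j|$ tail), so $g$ is continuous. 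The crucial point is the behaviour at the origin: writing $|\hat\psi_j(\omega)|=|\hat\psi(2^j\omega)|$ and using \eqref{eq:wavelet_in_zero}, for $\omega$ small enough that $2^J\omega$ lies in the range where $|\hat\psi(\eta)|^2\le M|\eta|^{2+2\epsilon}$,
\[
\frac12\sum_{j\le J}\bigl(|\hat\psi_j(\omega)|^2+|\hat\psi_j(-\omega)|^2\bigr)\le M\,\omega^{2+2\epsilon}\sum_{j\le J}2^{(2+2\epsilon)j}=O\bigl(\omega^{2+2\epsilon}\bigr),
\]
the geometric sum converging because $\epsilon>0$. Hence there are $C>0,\omega_1>0$ with $g(\omega)\ge 1-C\omega^{2+2\epsilon}$ on $[-\omega_1,\omega_1]$; in particular $g$ is close to $1$ on a whole neighbourhood of $0$, not merely at the point $0$.

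I would build $\phi$ so that positivity is automatic, by taking $\phi=|w|^2$ for a suitable $w\in L^2(\R)$. Then $\phi\ge0$ is real, and choosing $w$ with smooth, compactly supported Fourier transform makes $\phi$ Schwartz, hence in $L^1\cap L^2(\R)$. Set $b=\hat w$ and take $b$ real, even, nonnegative, smooth, supported in $[-\tau,\tau]$, normalised by $\|b\|_2=1$; then $w=\check b$ is real and even, so $\hat\phi=b\star b$. This $\hat\phi$ is even, nonnegative, supported in $[-2\tau,2\tau]$, and by Cauchy--Schwarz $0\le\hat\phi\le\hat\phi(0)=\|b\|_2^2=1$. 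Smoothness of $b$ gives $\hat\phi(\omega)=1-\tfrac12\|b'\|_2^2\,\omega^2+o(\omega^2)$, whence $|\hat\phi(\omega)|^2=1-O(\omega^2)$ and \eqref{eq:pos_low_pass_zero} holds. Two reductions now make \eqref{eq:pos_low_pass_lp} tractable: since $0\le\hat\phi\le1$ we have $|\hat\phi|^2\le\hat\phi=b\star b$, so it suffices to prove $b\star b\le g$; and, using $\|b\|_2=1$,
\[
1-(b\star b)(\omega)=\|b\|_2^2-\langle b,b(\cdot-\omega)\rangle=\tfrac12\|b-b(\cdot-\omega)\|_2^2 .
\]

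To verify $b\star b\le g$ I split on the support of $\hat\phi$. For $|\omega|>2\tau$ we have $b\star b(\omega)=0\le g(\omega)$. For $|\omega|\le 2\tau$ (with $\tau$ so small that $2\tau\le\omega_1$) it suffices, by the lower bound on $g$, to show $\tfrac12\|b-b(\cdot-\omega)\|_2^2\ge C\omega^{2+2\epsilon}$. Rescaling $b(\xi)=\tau^{-1/2}\beta(\xi/\tau)$ from a fixed profile $\beta$ turns the left-hand side into $h(\omega/\tau)$, where $h(s)=\tfrac12\|\beta-\beta(\cdot-s)\|_2^2$ is a fixed continuous function with $h(s)\ge\kappa s^2$ near $0$ and $h(s)\ge h_1>0$ on $[1,2]$. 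Because the obstacle $C\omega^{2+2\epsilon}$ is \emph{super}-quadratically flat while $1-b\star b$ is genuinely quadratic with curvature $\tfrac12\|b'\|_2^2=\tfrac{1}{2\tau^2}\|\beta'\|_2^2\to\infty$, both regimes $\omega\in[0,\tau]$ and $\omega\in[\tau,2\tau]$ close for $\tau$ small (on $[0,\tau]$ one needs $\kappa/\tau^2\ge C\omega^{2\epsilon}$, which follows from $\kappa\ge C\tau^{2+2\epsilon}$; on $[\tau,2\tau]$ one needs $h_1\ge C(2\tau)^{2+2\epsilon}$). Fixing $\omega_1$ first and then $\tau$ small enough yields \eqref{eq:pos_low_pass_lp}.

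The main obstacle is the tension, near the origin, between the three demands on $\phi$: it must be a genuine positive function (so $\hat\phi$ is positive definite, $|\hat\phi|\le\hat\phi(0)=1$, and $1-|\hat\phi|^2$ is \emph{exactly} quadratic, since any positive $L^1$ function with a second moment has $\hat\phi(\omega)=1-c\omega^2+o(\omega^2)$), yet $|\hat\phi|^2$ must lie under $g$ everywhere, including on sets where $g$ vanishes, which does occur (e.g. at high frequencies when the wavelet transform is unitary and band-limited). The device $\phi=|w|^2$ disposes of positivity and of the vanishing of $\hat\phi$ off a compact set at one stroke; the residual quantitative difficulty is precisely that $\hat\phi$ can only be quadratically flat while $g$ is flat to order $2+2\epsilon$, and it is the extra exponent $\epsilon$ furnished by \eqref{eq:wavelet_in_zero} that leaves just enough room, once $b$ is taken concentrated enough, to slip $|\hat\phi|^2$ underneath $g$.
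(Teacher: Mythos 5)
Your proof is correct and is essentially the paper's own argument in different clothing: the paper takes $\phi_0=\gamma^2$ with $\gamma$ even and band-limited (so $\hat\phi_0$ is compactly supported and $1-|\hat\phi_0|^2\geq\tilde\alpha\omega^2$ on that support), bounds the symmetrized wavelet sum by $A\omega^2$ using hypothesis~\eqref{eq:wavelet_in_zero} together with the Littlewood--Paley bound, and then dilates $\phi_0$ by a large factor $M$ (your $\tau=1/M$) so that the curvature of $1-|\hat\phi|^2$ beats the wavelet sum on the support, the complement being handled by~\eqref{eq:exp_decay_lp} exactly as in your off-support case. Your extra choices ($\hat w=b\ge 0$ so that $|\hat\phi|^2\le\hat\phi=b\star b$, and the sharper $O(\omega^{2+2\epsilon})$ bound in place of the paper's $A\omega^2$) are harmless refinements of the same mechanism; just make sure the quadratic lower bound $h(s)\ge\kappa s^2$ is extended from a neighbourhood of $0$ to all of $[0,1]$ (by continuity and strict positivity of $h$ away from $0$) before invoking it for every $\omega\in[0,\tau]$.
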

The rest of the proof consists in showing how to adapt Section~\ref{s:exp_decay_proof}, when the Gaussian function has been replaced by the $\phi$ of the lemma.

Because of~\eqref{eq:bound_j_leq_J}, with $\phi$ defined as in Lemma~\ref{lem:positive_low_pass}, we have, for any real-valued function $g\in L^2(\R)$:
\begin{align*}
\underset{j\leq J}{\sum}||g\star\psi_j||_2^2&\leq \int_\R|\hat g(\omega)|^2(1-|\hat\phi(\omega)|^2)\\
&=||g||_2^2-||g\star\phi||_2^2
\end{align*}
So:
\begin{align}
\underset{p\in\mathcal{P}_J,|p|=2}{\sum}||U[p]f||_2^2
&=\underset{j_1\leq J}{\sum}\,\underset{j_2\leq J}{\sum}||\,||f\star\psi_{j_1}|\star\psi_{j_2}|\,||_2^2\nonumber\\
&\leq\underset{j\leq J}{\sum}||f\star\psi_j||_2^2-||\,|f\star\psi_j|\star\phi||_2^2\label{eq:exp_decay_init_eq1}
\end{align}
Lemma~\ref{lem:low_freq_shift} is still true when $\chi_x$ is replaced by $\phi$, because the only property of $\chi_x$ which is really needed is its positivity. So for any $\delta\in\R$:
\begin{equation*}
\forall j\in\Z\quad\quad
||\,|f\star\psi_j|\star\phi||_2^2
\geq \int_\R|\hat f(\omega)|^2|\hat\psi_j(\omega)|^2|\hat\phi(\omega-\delta)|^2d\omega
\end{equation*}
In Section~\ref{s:exp_decay_proof}, we used this same inequality, with a different $\delta$ for each value of $j$. Here, we do not need $\delta$ to vary as a function of $j$; however, we will need to consider different values of $\delta$ and to average the inequalities over these different values.

If we combine the last inequality with~\eqref{eq:exp_decay_init_eq1}, we obtain:
\begin{equation*}
\forall \delta\in\R\quad\quad
\underset{p\in\mathcal{P}_J,|p|=2}{\sum}||U[p]f||_2^2
\leq \int_\R |\hat f(\omega)|^2(1-|\hat\phi(\omega-\delta)|^2) \left(\underset{j\leq J}{\sum}|\hat\psi_j(\omega)|^2\right)d\omega
\end{equation*}
As it holds for any $\delta\in\R$, we have, for any positive $c\in L^1(\R)$ whose integral over $\R$ is $1$:
\begin{align*}
\underset{p\in\mathcal{P}_J,|p|=2}{\sum}||U[p]f||_2^2
&\leq \int_\R c(\delta) \int_\R |\hat f(\omega)|^2(1-|\hat\phi(\omega-\delta)|^2) \left(\underset{j\leq J}{\sum}|\hat\psi_j(\omega)|^2\right)d\omega d\delta\\
&\leq \int_\R |\hat f(\omega)|^2(1-|\hat\phi|^2\star c(\omega)) \left(\underset{j\leq J}{\sum}|\hat\psi_j(\omega)|^2\right)d\omega
\end{align*}
We limit ourselves to the case where $c$ is even. As $|\hat\phi|^2$ is also even, the last inequality yields, by using the fact that $f$ is real:
\begin{equation*}
\underset{p\in\mathcal{P}_J,|p|=2}{\sum}||U[p]f||_2^2
\leq \int_\R |\hat f(\omega)|^2(1-|\hat\phi|^2\star c(\omega))\times \frac{1}{2} \left(\underset{j\leq J}{\sum}|\hat\psi_j(\omega)|^2+|\hat\psi_j(-\omega)|^2\right)d\omega
\end{equation*}
The conclusion comes from a last lemma, proven at the end of the paragraph.
\begin{lem}\label{lem:gaussian_bound}
If we take $c(\delta)=\frac{1}{\sqrt{\pi}}\exp(-\delta^2)$, then there exists $x>0$ such that:
\begin{equation*}
\forall \omega\in\R\quad\quad
(1-|\hat\phi|^2\star c(\omega))\times \frac{1}{2} \left(\underset{j\leq J}{\sum}|\hat\psi_j(\omega)|^2+|\hat\psi_j(-\omega)|^2\right)
\leq 1-|\hat\chi_x(\omega)|^2
\end{equation*}
\end{lem}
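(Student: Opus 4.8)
The plan is to prove the equivalent inequality $(1-A(\omega))T(\omega)\le 1-B_x(\omega)$ for all $\omega$, where I abbreviate $A(\omega)=|\hat\phi|^2\star c(\omega)$, $T(\omega)=\frac12\sum_{j\le J}\left(|\hat\psi_j(\omega)|^2+|\hat\psi_j(-\omega)|^2\right)$ and $B_x(\omega)=|\hat\chi_x(\omega)|^2=\exp(-2\omega^2/x^2)$. Lemma~\ref{lem:positive_low_pass} gives $0\le T\le 1-|\hat\phi|^2$, hence $0\le|\hat\phi|^2\le1$, $0\le A\le1$ and $0\le T\le1$; in particular $(1-A)T\le\min(T,\,1-A)$, which I use in each regime. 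Three elementary facts drive the argument and I would establish them first. (i) By \eqref{eq:wavelet_in_zero}, $|\hat\psi_j(\omega)|=|\hat\psi(2^j\omega)|=O(|2^j\omega|^{1+\epsilon})$, and since every $j\le J$ gives $|2^j\omega|\le 2^J|\omega|$, the geometric sum over $j\le J$ converges and yields $T(\omega)\le C|\omega|^{2+2\epsilon}$ for $|\omega|$ below a fixed threshold $\omega_*:=2^{-J}\nu_0$. (ii) Since $|\hat\phi(\delta)|^2\to1$ as $\delta\to0$ by \eqref{eq:pos_low_pass_zero}, there is $\rho>0$ with $|\hat\phi|^2\ge\frac12$ on $[-\rho,\rho]$; restricting the convolution integral to this interval and bounding $c$ below by its value at the farthest endpoint gives the pointwise lower bound $A(\omega)\ge\frac{\rho}{\sqrt\pi}\exp(-(|\omega|+\rho)^2)$. (iii) $A$ is continuous and strictly positive everywhere, so it is bounded below by a positive constant on every compact set.

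I would then fix the constants in the order $\omega_2\to\omega_1\to x$ and split $\R$ into three regions. \emph{Near zero} ($|\omega|\le\omega_2$, with $\omega_2\le\omega_*$ chosen small enough that $C\omega_2^{2+2\epsilon}\le\frac12$): here $(1-A)T\le T\le C|\omega|^{2+2\epsilon}$ by (i), and using $1-e^{-u}\ge u/(1+u)$ with $u=2\omega^2/x^2$ one gets $1-B_x(\omega)\ge \frac{2\omega^2}{x^2+2\omega^2}\ge C|\omega|^{2+2\epsilon}$ on $[-\omega_2,\omega_2]$ as soon as $x$ is small; the point is that the vanishing of $T$ is of order $|\omega|^{2+2\epsilon}=o(\omega^2)$, strictly faster than the $\Theta(\omega^2)$ vanishing of $1-B_x$. \emph{Intermediate} ($\omega_2\le|\omega|\le\omega_1$, for any fixed $\omega_1>\omega_2$): by (iii) there is $a_0>0$ with $A\ge a_0$ on this compact set, so $(1-A)T\le 1-a_0$, while $B_x(\omega)\le\exp(-2\omega_2^2/x^2)\to0$ as $x\to0$, so $B_x\le a_0$, i.e. $1-B_x\ge 1-a_0$, once $x$ is small. \emph{Far} ($|\omega|\ge\omega_1$): by (ii), $A(\omega)\ge B_x(\omega)$ reduces to $\omega^2(2/x^2-1)\ge 2\rho|\omega|+\rho^2-\log(\rho/\sqrt\pi)$, whose left side has coefficient $2/x^2-1\to+\infty$ as $x\to0$, so for $x$ small it holds for all $|\omega|\ge\omega_1$, and then $(1-A)T\le 1-A\le 1-B_x$. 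Finally I would take $x>0$ small enough to meet the finitely many smallness requirements of the three regions simultaneously, noting that everything is trivial at $\omega=0$ since $T(0)=0$.

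The main obstacle is the coupling of the three regions through the single free parameter $x$: decreasing $x$ shrinks $B_x$, which helps the near-zero and intermediate estimates, and this is exactly what is needed to push the far-field matching $A\ge B_x$ through for a \emph{fixed} threshold $\omega_1$. The structural reason a single $x$ can serve all regions is that smaller $x$ simultaneously (a) enlarges the near-zero region on which $1-B_x$ dominates the $o(\omega^2)$ remainder $T$, (b) drives $B_x$ below the constant $a_0$ on the fixed compact middle, and (c) steepens the Gaussian $B_x$ below the intrinsic decay $e^{-(|\omega|+\rho)^2}$ of $A$. The one genuine constraint is $x<\sqrt2$, forced by comparing the exponents $-2\omega^2/x^2$ and $-\omega^2$ in the far field; any smaller $x$ works, so after fixing $\omega_2$ and $\omega_1$ the required $x$ is obtained by taking it small. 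The only slightly delicate step is the near-zero estimate, where one must avoid the naive expansion $1-B_x\approx 2\omega^2/x^2$ (valid only for $|\omega|\ll x$) and instead use the global bound $1-e^{-u}\ge u/(1+u)$ together with the smallness of $C\omega_2^{2+2\epsilon}$.
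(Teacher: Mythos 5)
Your proof is correct and follows essentially the same route as the paper: the three regions (near zero, a compact intermediate band, and the far field) correspond exactly to the paper's three properties of $F$ — the $o(\omega^2)$ vanishing at the origin, the strict bound $F<1$ combined with compactness, and a Gaussian lower bound on $|\hat\phi|^2\star c$ at infinity. The only difference is that you write out explicitly the compactness/gluing argument and the quantitative near-zero estimate that the paper leaves implicit.
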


\end{proof}

We now give the proofs of Lemmas~\ref{lem:positive_low_pass} and~\ref{lem:gaussian_bound}.
\begin{proof}[Proof of Lemma~\ref{lem:positive_low_pass}]
Let $\gamma:\R\to\R$ be any even, rapidly decreasing and band-limited function such that $\int_\R\gamma^2=1$. We set $\phi_0=\gamma^2$. This is also an even, rapidly decreasing and band-limited function.

Because $\int_\R\gamma^2=1$, we have $\hat\phi_0(0)=1$. As $\phi_0$ is even, $\hat\phi_0'(0)=0$. But the second derivative is strictly negative: $\hat\phi_0''(0)=-(2\pi)^2\int_\R t^2\phi_0(t)dt<0$. We deduce from these relations that there exists $\alpha>0$ such that:
\begin{equation}\label{eq:pos_low_pass_dl0}
|\hat\phi_0(\omega)|^2=1-\alpha\omega^2+o(\omega^2)\quad\quad\mbox{when }\omega\to 0
\end{equation}
 Let us show that, for $M$ large enough, $\phi:t\to M^{-1}\phi_0(M^{-1}t)$ satisfies the desired properties. By construction, it is a real-valued positive function. It is rapidly decreasing, so $\phi\in L^1\cap L^2(\R)$. The property~\eqref{eq:pos_low_pass_zero} holds; only the inequality~\eqref{eq:pos_low_pass_lp} is left to prove.

Because of Equation~\eqref{eq:pos_low_pass_dl0} and because $\hat\phi_0$ is compactly-supported, there exists $\tilde\alpha>0$ such that for all $\omega\in\mbox{Supp}(\hat\phi_0)$:
\begin{equation*}
|\hat\phi_0(\omega)|^2\leq 1-\tilde\alpha\omega^2
\end{equation*}
By the assumption~\eqref{eq:wavelet_in_zero}, $|\hat\psi(\omega)|=O(|\omega|^{1+\epsilon})$ when $\omega\to 0$, for some $\epsilon>0$. This implies that:
\begin{equation*}
\frac{1}{2}\underset{j\leq J}{\sum}\left(|\hat\psi_j(\omega)|^2+|\hat\psi_j(-\omega)|^2\right)=o(\omega^2)
\quad\quad\mbox{when }\omega\to 0
\end{equation*}
Because this sum is moreover bounded by $1$ on all $\R$, there exists $A>0$ such that:
\begin{equation*}
\forall \omega\in\R\quad\quad
\frac{1}{2}\underset{j\leq J}{\sum}\left(|\hat\psi_j(\omega)|^2+|\hat\psi_j(-\omega)|^2\right)\leq A\omega^2
\end{equation*}
If $M\geq \sqrt{A/\tilde\alpha}$, then, on the support of $\hat\phi$:
\begin{align*}
&|\hat\phi(\omega)|^2+\frac{1}{2}\underset{j\leq J}{\sum}\left(|\hat\psi_j(\omega)|^2+|\hat\psi_j(-\omega)|^2\right)\\
&=|\hat\phi_0(M\omega)|^2+\frac{1}{2}\underset{j\leq J}{\sum}\left(|\hat\psi_j(\omega)|^2+|\hat\psi_j(-\omega)|^2\right)\\
&\leq 1-\tilde\alpha M^2\omega^2+A\omega^2\\
&\leq 1
\end{align*}
Outside the support of $\hat\phi$, the inequality is also true, because of the Littlewood-Paley condition~\eqref{eq:exp_decay_lp}. So Equation~\eqref{eq:pos_low_pass_lp} holds.
\end{proof}

\begin{proof}[Proof of Lemma~\ref{lem:gaussian_bound}]
Let us define:
\begin{equation*}
F(\omega)=(1-|\hat\phi|^2\star c(\omega))\times \frac{1}{2} \left(\underset{j\leq J}{\sum}|\hat\psi_j(\omega)|^2+|\hat\psi_j(-\omega)|^2\right)
\end{equation*}
We are going to prove that $F$ has the following three properties:
\begin{enumerate}
\item $\forall \omega\in\R,F(\omega)<1$
\item $F(\omega)=O(\omega^2)$ when $\omega\to 0$
\item There exists $x>0$ such that $F(\omega)\leq 1-|\hat \chi_x(\omega)|^2$ if $|\omega|$ is large enough.
\end{enumerate}
These three properties imply that $F$ is bounded by $1-|\hat\chi_x|^2$ on all $\R$, for $x$ small enough. This assertion relies on a compacity argument; as it is relatively straightforward, we do not prove it.

The first property is an immediate consequence of the Littlewood-Paley inequality~\eqref{eq:exp_decay_lp} and of the fact that $|\hat\phi|^2\star c>0$ over $\R$.

The second one is a consequence of a fact that has been explained in the proof of Lemma~\ref{lem:positive_low_pass}:
\begin{equation*}
\frac{1}{2}\left(\underset{j\leq J}{\sum}|\hat\psi_j(\omega)|^2+|\hat\psi_j(-\omega)|^2\right)=o(\omega^2)\quad\quad\mbox{when }\omega\to 0
\end{equation*}

For the last one, we remark that, for any $\omega\in\R$ such that $\omega \geq 1$:
\begin{align*}
|\hat\phi|^2\star c(\omega)
&=\int_\R|\hat\phi(\delta)|^2 c(\omega-\delta)d\delta\\
&\geq \int_0^1|\hat\phi(\delta)|^2c(\omega-\delta)d\delta\\
&\geq c(\omega) \int_0^1|\hat\phi(\delta)|^2d\delta
\end{align*}
As all the functions are even, this is also true for $\omega\leq-1$. So when $|\omega|$ is large enough:
\begin{equation*}
|\hat\phi|^2\star c(\omega)
\geq \frac{1}{\sqrt{\pi}}\exp(-\omega^2)\left(\int_0^1|\hat\phi(\delta)|^2d\delta\right)
\geq \exp(-2\omega^2)
=|\hat\chi_1(\omega)|^2
\end{equation*}
This proves the third property and concludes.
\end{proof}

\bibliographystyle{plainnat}
\bibliography{../bib_articles.bib,../bib_proceedings.bib,../bib_livres.bib,../bib_misc.bib}

\begin{thebibliography}{10}
\providecommand{\natexlab}[1]{#1}
\providecommand{\url}[1]{\texttt{#1}}
\expandafter\ifx\csname urlstyle\endcsname\relax
  \providecommand{\doi}[1]{doi: #1}\else
  \providecommand{\doi}{doi: \begingroup \urlstyle{rm}\Url}\fi

\bibitem[And\'en and Mallat(2011)]{anden}
J.~And\'en and S.~Mallat.
\newblock Multiscale scattering for audio classification.
\newblock In \emph{Proceedings of the International Society of Music
  Information Retrieval 2011 Conference}, pages 657--662, 2011.

\bibitem[Bruna et~al.(2015)Bruna, Mallat, Bacry, and Muzy]{bruna_intermittent}
J.~Bruna, S.~Mallat, E.~Bacry, and J.-F. Muzy.
\newblock Intermittent process analysis with scattering moments.
\newblock \emph{Annals of Statistics}, 43\penalty0 (1):\penalty0 323--351,
  2015.

\bibitem[Dosovitskiy and Brox(2015)]{dosovitskiy}
A.~Dosovitskiy and T.~Brox.
\newblock Inverting convolutional networks with convolutional networks.
\newblock Technical report, \,, 2015.
\newblock {h}ttp://arxiv.org/abs/1506.02753.

\bibitem[Hirn et~al.(2015)Hirn, Poilvert, and Mallat]{hirn}
M.~Hirn, N.~Poilvert, and S.~Mallat.
\newblock Quantum energy regression using scattering transforms.
\newblock \emph{Submitted to the International Conference on Machine Learning},
  2015.
\newblock http://arxiv.org/abs/1502.02077.

\bibitem[Mahendran and Vedaldi(2015)]{mahendran}
A.~Mahendran and A.~Vedaldi.
\newblock Understanding deep image representations by inverting them.
\newblock In \emph{The IEEE Conference on Computer Vision and Pattern
  Recognition}, pages 5188--5196, 2015.

\bibitem[Mallat(2012)]{group_invariant}
S.~Mallat.
\newblock Group invariant scattering.
\newblock \emph{Communications in Pure and Applied Mathematics}, 65\penalty0
  (10):\penalty0 1331--1398, 2012.

\bibitem[Oyallon and Mallat(2015)]{oyallon}
E.~Oyallon and S.~Mallat.
\newblock Deep roto-translation scattering for object classification.
\newblock In \emph{The IEEE Conference on Computer Vision and Pattern
  Recognition}, pages 2865--2873, 2015.

\bibitem[Selesnick(2001)]{selesnick}
I.~W. Selesnick.
\newblock Hilbert transform pairs of wavelet bases.
\newblock \emph{IEEE Signal Processing Letters}, 8, 2001.

\bibitem[Simonyan et~al.(2014)Simonyan, Vedaldi, and
  Zisserman]{simonyan_deep_inside}
K.~Simonyan, A.~Vedaldi, and A.~Zisserman.
\newblock Deep inside convolutionnal networks: visualising image classification
  models and saliency maps.
\newblock In \emph{International Conference on Learning Representations
  Workshop}, 2014.

\bibitem[Waldspurger(2015)]{these}
I.~Waldspurger.
\newblock \emph{Wavelet transform modulus: phase retrieval and scattering}.
\newblock PhD thesis, École Normale Supérieure, Paris, 2015.
\newblock URL \url{http://www.di.ens.fr/~waldspurger/these.pdf}.

\end{thebibliography}

\end{document}